\documentclass[11pt]{article}
\usepackage{graphics}
\usepackage[dvips]{graphicx}

\usepackage{amsmath,amssymb,amsthm}
\usepackage[utf8]{inputenc}
\usepackage{tikz}
\usepackage{subfigure}
\usepackage{color}
\usepackage{comment}
\usepackage{float}
\usepackage[a4paper,margin=2.5cm]{geometry}

\title{
On the Ordering and Injectivity of Lagrange Constants Associated with Certain Substitutions
}

\author{Shin-ichi Yasutomi}

\date{}

\newtheorem{thm}{Theorem}[section]

\newtheorem{lem}[thm]{Lemma}

\newtheorem{con}[thm]{Conjecture}

\newtheorem{conj}[thm]{Conjecture}

\theoremstyle{definition}

\newtheorem{example}{Example}[section]

\theoremstyle{remark}

\numberwithin{equation}{section}

\newcommand{\distint}[1]{\Vert #1 \Vert}

\usepackage{color}

\begin{document}

\maketitle

\footnote[0]{2020 {\it Mathematics Subject Classification}. Primary 11J06; Secondary 11J70, 11B85, 68R15.}
\footnote[0]{{\it Key words and phrases.}  Lagrange spectrum, Markoff uniqueness conjecture, continued fractions, Sturmian words, substitutions, Christoffel words.}

\begin{abstract}
Motivated by the modern characterization of the Markoff spectrum via mechanical words and the Markoff Uniqueness Conjecture, 
we study the Lagrange constants of continued fractions generated by a parameterized family of substitutions $\phi_{a,b}$ $\phi_{a,b}$
 defined by $0 \mapsto aa$ and $1 \mapsto bb$ with $b = a + 1$. Based on a three-fold arithmetic classification of pairs of rational numbers $0 \le x < y \le 1$, we investigate the order relations between the corresponding Lagrange constants $\mathcal{L}([\phi_{a,a+1}(G(x))])$ associated with the mechanical words $G(x)$. While an inequality is established for pairs of the second type (type (2)) whenever $a \ge 1$, the comparisons for type (1) and type (3) pairs hold for $a \ge 2$. Consequently, for any integer $a \ge 2$, where the order relations across all three types are completely determined, we establish an analogue of the Markoff Uniqueness Conjecture: the map $x \mapsto \mathcal{L}([\phi_{a,a+1}(G(x))])$ is injective on $\mathbb{Q} \cap [0, 1]$. Furthermore, supported by extensive numerical computations, we propose a general conjecture on the order relations for arbitrary $b > a$, highlighting a sharp phase transition in ordering behavior at the boundary $b = a^2 + 2$.
\end{abstract}
\section{Introduction}
Let $\alpha$ be an irrational number. The Lagrange constant of $\alpha$, denoted by $\mathcal{L}(\alpha)$, is defined as
\begin{align}
\mathcal{L}(\alpha) = \limsup_{q \to \infty} \frac{1}{q \distint{q \alpha}},
\end{align}
where $q$ ranges over positive integers, and $\distint{x}$ denotes the distance from $x$ to the nearest integer.
The set $\mathcal{L}$ is called the Lagrange spectrum, defined as:
\begin{align}
\mathcal{L} = \{ \mathcal{L}(\alpha) \mid \alpha \in \mathbb{R} \setminus \mathbb{Q} \}.
\end{align}
The minimum value of $\mathcal{L}$ is $\sqrt{5}$. In the range below $3$, 
A.~Markov \cite{Markoff1879,Markoff1880} established that $\mathcal{L}$ forms a discrete sequence that accumulates to $3$.
Namely, a positive integer $m \in \mathbb{Z}_{>0}$ is called a Markov number
if it is part of a positive integer solution $(x,y,z)$ to $x^2 + y^2 + z^2 = 3xyz$. In this notation,
\begin{align}
\mathcal{L} \cap (-\infty, 3) = \left\{ \sqrt{9 - \frac{4}{m^2}} \;\middle|\; m \text{ is a Markov number} \right\},
\end{align}
which is denoted by $\mathcal M$.
It is known that the continued fraction expansion of a number whose Lagrange constant belongs to $\mathcal{M}$ can be
 expressed in terms of positive rational numbers less than or equal to $1$.
To show this, let us introduce some notation. 
For a rational number $x$ satisfying $0 \le x \le 1$, we consider the right-infinite word $G(x)$ generated by the alphabet $\{0, 1\}$ defined as follows:
\begin{align}
G(x) = G(x,1)G(x,2)\ldots G(x,n)\ldots,
\end{align}
where
\begin{align}
G(x,n) = \lfloor nx \rfloor - \lfloor (n-1)x \rfloor.
\end{align}
For example, $G(0) = 00\dots = 0^\infty$ and $G(1/2) = 0101\dots = (01)^\infty$.
Let $\phi$ be the substitution defined by $0 \to 11$ and $1 \to 22$.
For a finite or right-infinite word $w = w_1 w_2 \dots$ over an alphabet $A \subset \mathbb{Z}_{>0}$, we denote by $[w]$ the regular continued fraction expansion
\begin{equation}
[w] = [0; w_1, w_2, \dots].
\end{equation}
The following characterization is a modern formulation of the classical theorem of Markoff, expressed in terms of Christoffel (or Sturmian) words; see Reutenauer \cite{Reutenauer2019}.
\begin{thm}\label{t1}
Let $\alpha \in \mathbb{R} \setminus \mathbb{Q}$.
Then $\mathcal{L}(\alpha) \in \mathcal{M}$ if and only if there exists a rational number $x$ satisfying $0 \le x \le 1$ such that $\alpha \sim [\phi(G(x))]$.
Here, $\alpha \sim \beta$ means that the continued fraction expansions of $\alpha$ and $\beta$ eventually coincide.
\end{thm}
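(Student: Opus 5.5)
This is the classical Markoff theorem restated in the language of Christoffel words, so the plan is to reduce it to two known ingredients rather than reprove Markoff theory from scratch. \emph{First}, recall Perron's formula. If $\alpha$ has partial quotients $(a_n)$, then
\[
\mathcal{L}(\alpha)=\limsup_{n\to\infty}\bigl([a_{n+1};a_{n+2},\dots]+[0;a_n,a_{n-1},\dots,a_1]\bigr).
\]
Hence $\mathcal{L}(\alpha)$ depends only on the tail of the expansion, and it is invariant under $\sim$. \emph{Second}, invoke the classical theorem of Markoff in the form given by Bombieri and by Reutenauer \cite{Reutenauer2019}. It states that $\mathcal{L}(\alpha)<3$ if and only if the expansion of $\alpha$ is eventually periodic with period $\phi(w)$ for some Christoffel word $w$ (or $w=0$ or $1$). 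The associated value is then $\sqrt{9-4/m^2}$, where $m$ is the Markov number attached to $w$ via the Cohn matrix of $w$.

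Next I translate Christoffel words into the words $G(x)$. For $x=p/q$ in lowest terms, $G(x)$ is purely periodic of period $q$. Its period word $G(x,1)\cdots G(x,q)$ is the upper Christoffel word of slope $p/q$, since it is read off the digitization $\lfloor nx\rfloor$ of the segment from $(0,0)$ to $(q,p)$. For $x=0$ and $x=1$ the periods are the single letters $0$ and $1$, giving $[\overline{1,1}]$ and $[\overline{2,2}]$, with Lagrange values $\sqrt5$ and $\sqrt8$ (Markov numbers $1$ and $2$). Consequently $[\phi(G(x))]$ is purely periodic with period $\phi$ applied to a Christoffel word. Upper and lower Christoffel words of the same slope are conjugate, up to reversal, and Perron's formula takes the limsup over both directions, so the choice of convention does not matter. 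As $x$ runs over $\mathbb{Q}\cap[0,1]$, every Christoffel word (up to conjugacy) occurs.

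With these facts both directions are short. For ``if'', take $\alpha\sim[\phi(G(x))]$. Then $\mathcal{L}(\alpha)=\mathcal{L}([\phi(G(x))])$, and by the classical theorem this equals $\sqrt{9-4/m^2}$ for the Markov number $m$ of the Christoffel word of slope $x$, so it lies in $\mathcal{M}$. For ``only if'', $\mathcal{L}(\alpha)\in\mathcal{M}$ gives $\mathcal{L}(\alpha)<3$. The classical theorem then says the tail of $\alpha$'s expansion is a periodic repetition of $\phi(w)$ with $w$ Christoffel. Taking $x$ to be the slope of $w$, the expansion agrees eventually with a shift of $\phi(G(x))$. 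Since shifts of the periodic word by full letters of $G(x)$ correspond to conjugates, the tails coincide and $\alpha\sim[\phi(G(x))]$.

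The main obstacle is the only-if direction inside the classical theorem: showing that $\mathcal{L}(\alpha)<3$ forces the expansion to consist eventually of even-length blocks $11$ and $22$ arranged in a balanced (Sturmian) way. If I had to argue this rather than cite it, I would proceed in three steps.
\begin{enumerate}
\item Use Perron's formula to exclude partial quotients $\ge3$, because any such quotient gives a sum exceeding $3$.
\item Exclude the patterns $121$ and $212$ and odd runs, by direct two-sided estimates showing they force values $\ge 3$ infinitely often.
\item Show that unbalanced factors of the induced $\{0,1\}$-word (both $0u0$ and $1u1$ occurring with $u$ a palindrome) push the limsup to at least $3$, so the derived word is balanced. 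A balanced word with $\mathcal{L}<3$ must then be eventually periodic; this uses the fact that Sturmian aperiodic ones give exactly $3$.
\end{enumerate}
The balancedness step is the genuinely delicate combinatorial part. Here I would rely on the presentation in \cite{Reutenauer2019}.
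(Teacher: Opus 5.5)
Your proposal is correct and follows the paper's route: the paper gives no proof of this statement and simply cites it as the Christoffel-word form of Markoff's classical theorem from \cite{Reutenauer2019}. Your tail-invariance of $\mathcal{L}$ via Perron's formula and your dictionary between $G(x)$ and Christoffel words supply the translation that the citation leaves implicit. One harmless slip: the period $G(x,1)\cdots G(x,q)$ of $G(p/q)$ is the \emph{lower} Christoffel word $0z1$ (it starts with $0$ and ends with $1$), and in Reutenauer's length-$(p+q)$ convention its slope is $p/(q-p)$, so in the ``only if'' direction you should take $x=|w|_1/|w|$ rather than ``the slope of $w$''.
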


Using the characterization of Theorem \ref{t1}, the Markoff Uniqueness Conjecture can be stated as follows.

\begin{conj}
Let $x, y \in \mathbb{Q}$ such that $0 \le x, y \le 1$.
If $\mathcal{L}([\phi(G(x))]) = \mathcal{L}([\phi(G(y))])$, then $x = y$.
\end{conj}

Motivated by the characterization in Theorem \ref{t1}, we study continued fractions obtained by replacing the substitution $0 \to 11$ and $1 \to 22$
 with more general substitutions. We then define the substitutions $\phi_{a,b}$ and investigate the associated Lagrange constants.

Let $a, b \in \mathbb{N}$.
Let $\phi_{a,b}$ be the substitution defined by $0 \mapsto aa$ and $1 \mapsto bb$.
Then we have the following theorem.

\begin{thm}\label{mt1}
Let $a \in \mathbb{Z}_{>0}$ with $a > 1$, and let $b = a + 1$.
Let $x, y \in \mathbb{Q}$ such that $0 \le x, y \le 1$.
If $\mathcal{L}([\phi_{a,b}(G(x))]) = \mathcal{L}([\phi_{a,b}(G(y))])$, then $x = y$.
\end{thm}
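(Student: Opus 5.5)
Since the abstract indicates the argument is built on a three-fold arithmetic classification of pairs $0\le x<y\le 1$ of rationals, I would prove Theorem~\ref{mt1} by showing that for any such pair the Lagrange constants $\mathcal{L}([\phi_{a,a+1}(G(x))])$ and $\mathcal{L}([\phi_{a,a+1}(G(y))])$ are strictly ordered; strict inequality in every case gives injectivity at once. First I would reduce $\mathcal{L}$ to a finite computation. For rational $x=p/q$ in lowest terms, $G(x)$ is purely periodic with period the Christoffel-type word $w_x$ of length $q$ containing $p$ ones. So $[\phi_{a,b}(G(x))]$ is a quadratic irrational with purely periodic expansion of period $\phi_{a,b}(w_x)$. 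By Perron's formula, $\mathcal{L}$ is then the maximum over the finitely many shifts $i$ of $[c_i;c_{i+1},\dots]+[0;c_{i-1},c_{i-2},\dots]$, taken along the bi-infinite periodic word $\cdots\phi(w_x)\phi(w_x)\cdots$.

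Next I would locate the maximizing position. The digits are $a$ and $b=a+1$, and since $a\ge 2$ the integer part dominates, so the maximum is attained at a $b$. Comparing two positions then reduces to the standard alternating lexicographic comparison of the futures and pasts. Using the balancedness and palindromic structure of Christoffel words (the central word $w_x = 0\,u\,1$ with $u$ a palindrome), I expect the maximum to sit at the distinguished position where the two-sided word reads $\cdots\phi(u)\,bb\,|\,aa\,\phi(u)\cdots$, up to symmetry, exactly as in the classical Markov case. This yields an explicit expression $\mathcal{L}_x=\sqrt{\Delta_x}/m_x$, where $m_x$ and $\Delta_x$ come from the matrix $M_x=\prod\begin{pmatrix}c&1\\1&0\end{pmatrix}$ over the period, with $\Delta_x=(\operatorname{tr}M_x)^2-4$.

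Then I would compare the two values, splitting according to the three types of pairs $(x,y)$ defined from their Farey/Stern--Brocot relationship: type (1) Farey neighbours or a common-ancestor configuration, type (2) one lying in the subtree of the other, type (3) the remaining case. In each type I would write $w_x$ and $w_y$ as products of the standard pair $(w_{x_1},w_{x_2})$ of a common Farey ancestor, for instance $w_x=w_{x_1}^k w_{x_2}$-type factorizations. The words $\phi(w_x)^\infty$ and $\phi(w_y)^\infty$ then share a long common block around the maximizing position and first differ at an explicit place. The comparison of $\mathcal{L}$ becomes a comparison of continued fractions agreeing up to that place, which I would carry out with the usual alternating-parity rule together with tail estimates of the form $|[0;u\,\alpha]-[0;u\,\beta]|\le |\alpha-\beta|/q_{|u|}^2$. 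These estimates have to account for the discrepancy in the next digits.

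The main obstacle is the case analysis for types (1) and (3): the first difference can occur in the past and in the future simultaneously, with opposite signs. One must then show the dominant contribution wins, and this needs quantitative control that appears to require $a\ge 2$. For this step I would first bound the secondary term by using that partial quotients $\ge a$ give $q_n\ge$ (golden-type growth with ratio $\ge a$), and verify that $1/a^2$-sized corrections cannot overturn a unit-digit difference when $a\ge2$. Type (2) should follow from a monotonicity argument in the subtree, valid already for $a\ge1$.
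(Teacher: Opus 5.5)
Your overall plan has the paper's shape: prove a strict inequality for every pair $x<y$ via a three-way case split, evaluate $\mathcal{L}$ at the single position $\cdots bb\,|\,aa\cdots$ (the paper's Theorem~\ref{t3}, $\mathcal{L}=L(x,1)$, valid for all $a<b$), and resolve a competition between a change in the past and a change in the future. However, your description of the three types is wrong, so the case split you build on it does not work as stated. In Lemma~\ref{l1}, type (1) means neither of $x,y$ is a Stern--Brocot ancestor of the other (plus the convention $(0,1)$). Type (2) means $x$ is an ancestor of $y$, so $G(y)$ begins with a full period of $G(x)$. Type (3) means $y$ is an ancestor of $x$. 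Farey neighbours other than $(0,1)$ always fall in (2) or (3), e.g.\ $(1/2,2/3)$ and $(1/3,1/2)$, and the inequality goes in opposite directions in those two types. Your ``common ancestor'' and ``one in the subtree of the other'' already exhaust all pairs, leaving nothing for ``the remaining case''. Lumping both ancestor directions into a ``monotonicity'' case valid for $a\ge1$ is also not what is available. The paper handles both ancestor cases by dominance estimates, and the $y$-ancestor case only for $a\ge2$.

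The more serious gap is the quantitative step, which is the whole content of Lemmas~\ref{ml1}--\ref{ml3}. Let $n$ be minimal with $\lfloor nx\rfloor<\lfloor ny\rfloor$. The reflection symmetry $G(\cdot,j)=G(\cdot,1-j)$ for $j\not\equiv 0,1\pmod{\mathrm{den}}$ (Lemma~\ref{l2}(7)) shows two things. First, at the maximizing position the future and past read $[0;a,a,\phi(v),\dots]$ and $[0;b,\phi(v),\dots]$ with the \emph{same} block $v=G(\cdot,2)\cdots G(\cdot,n-1)$ for $x$, for $y$, and on both sides. Second, the changes sit at $G(\cdot,n)$ and $G(\cdot,1-n)$. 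In type (1) both are unit changes $aa\leftrightarrow bb$ at the same distance $n$ from the cut, with opposite signs; the past wins only because its prefix is one partial quotient shorter.

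Your proposed tool, absolute growth bounds $q_k\gtrsim a^k$ for the secondary term together with upper tail bounds, cannot decide this. Both competing denominators contain the arbitrarily long block $\phi(v)$, so absolute bounds for them drift apart exponentially in $|v|$. Moreover, an upper tail bound gives no lower bound on the dominant term. The missing idea is a relative comparison through the shared block:
write all four continued fractions as M\"obius images through one matrix $M$ for $\phi(v)$;
use $\det M=1$ to get the two differences exactly as $-R/(Q_1Q_2)$ and $R'/(Q_1'Q_2')$;
bound $R<(1+2/a)^2R'$ using $x_1>x_2$, $y_1>y_2$ (Lemma~\ref{l2}(8)) and $a+\frac{1}{a+2}<x_i,y_i<a+1+\frac1a$;
compare $Q_1$ with $Q_1'$ and $Q_2$ with $Q_2'$ coefficient by coefficient in the entries of $M$, giving $Q_i>(1+2/a)Q_i'$ for $a\ge3$, with constant $14/9$ for $a=2$, where the margin is thin.
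Types (2) and (3) use the same setup. There only one side changes at distance $n$, so cruder bounds suffice.
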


Theorem \ref{mt1} follows from a more detailed result.
It becomes possible to describe the order relations among Lagrange constants.
Although the case $a=1, b=2$ corresponds to the classical case, it allows us to describe the order relations among some of the Lagrange constants.
To state the result, 
we prepare the following lemma.The proof will be given in the next section.
\begin{lem}\label{l1}
Let $x, y \in \mathbb{Q}$ such that $0 \le x < y \le 1$.
Let $\mathrm{den}(u)$ denote the denominator of a rational number $u$.
Then, the relation between $x$ and $y$ is classified into the following three types:
\begin{enumerate}
\item[\rm (1)] $(x, y) = (0, 1)$, or there exists a natural number $n$ such that $n < \min(\mathrm{den}(x), \mathrm{den}(y))$ and $\lfloor nx \rfloor < \lfloor ny \rfloor$.
\item[\rm (2)] $\mathrm{den}(x) < \mathrm{den}(y)$, and $\lfloor jx \rfloor = \lfloor jy \rfloor$ for all $1 \le j \le \mathrm{den}(x)$.
\item[\rm (3)] $\mathrm{den}(x) > \mathrm{den}(y)$, $\lfloor jx \rfloor = \lfloor jy \rfloor$ for all $1 \le j < \mathrm{den}(y)$, and $\lfloor jx \rfloor < \lfloor jy \rfloor$ for $j = \mathrm{den}(y)$.
\end{enumerate}
\end{lem}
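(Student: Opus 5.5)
The plan is to show that the three types exhaust all possibilities, by looking at the first index where the floor sequences of $x$ and $y$ separate. Write $p=\mathrm{den}(x)$, $q=\mathrm{den}(y)$ (with $\mathrm{den}(0)=\mathrm{den}(1)=1$), $x=a/p$, $y=c/q$ in lowest terms, and $m=\min(p,q)$.

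\textbf{Step 1: the first separating index.} Since $x<y$, we have $\lfloor jx\rfloor\le\lfloor jy\rfloor$ for every $j\ge1$. At $j=q$ we have $qy=c\in\mathbb{Z}$ and $qx<qy$, so $\lfloor qx\rfloor<c=\lfloor qy\rfloor$. Hence the set of $j$ with $\lfloor jx\rfloor<\lfloor jy\rfloor$ is nonempty. Let $n_0$ be its minimum; then $n_0\le q$, and the floors agree for all $j<n_0$.

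\textbf{Step 2: case split on $n_0$ and on $p$ versus $q$.}
\begin{enumerate}
\item[(i)] If $n_0<m$, then $n=n_0$ witnesses type (1).
\item[(ii)] If $n_0\ge m$ and $p>q$, then $m=q$ and $n_0\le q$ force $n_0=q$. This is exactly type (3).
\item[(iii)] If $n_0\ge m$ and $p<q$, then the floors agree for $j<p$. If moreover $n_0>p$, the floors also agree at $j=p$, which is type (2). It remains to rule out $n_0=p$.
\item[(iv)] If $n_0\ge m$ and $p=q$, then $n_0=p$. The pair $(x,y)$ must then be shown to equal $(0,1)$.
\end{enumerate}

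\textbf{Step 3: the two exclusions.} These are the only substantive points, and both use the index $j=p-1$.

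For case (iv), suppose $p=q\ge2$. Then $a,c\ge1$, so $\lfloor (p-1)a/p\rfloor=\lfloor a-a/p\rfloor=a-1$ and similarly $\lfloor (p-1)c/p\rfloor=c-1$. Since $a<c$, these differ, so $n_0\le p-1<m$, contradicting $n_0\ge m$. Hence $p=q=1$, which gives $(x,y)=(0,1)$, type (1).

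For case (iii) with $n_0=p$, first take $p=1$. Then $x=0$, and $n_0=1$ means $\lfloor y\rfloor\ge1$, so $y=1$ and $q=1$, contradicting $q>p$. So assume $p\ge2$, hence $1\le a<p$. The condition $\lfloor py\rfloor>a$ gives $y\ge (a+1)/p$. Agreement at $j=p-1$ gives $\lfloor (p-1)y\rfloor=a-1$, so $y<a/(p-1)$. These two bounds are incompatible, because
\[
(a+1)(p-1)-ap=p-1-a\ge0
\]
shows $(a+1)/p\ge a/(p-1)$. This contradiction completes the trichotomy.

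The main obstacle is this last exclusion, i.e. proving that when $\mathrm{den}(x)<\mathrm{den}(y)$ and the floors agree below $\mathrm{den}(x)$, they cannot separate exactly at $j=\mathrm{den}(x)$. The choice $j=p-1$ is what makes it work: it is the neighbouring index, and it pins $y$ into the empty interval $[(a+1)/p,\,a/(p-1))$. Everything else is bookkeeping with $n_0$.
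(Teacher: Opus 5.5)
Your proof is correct and follows essentially the same route as the paper: take the minimal index where the floors separate, split according to how $\mathrm{den}(x)$ compares with $\mathrm{den}(y)$, and use the neighbouring index $\mathrm{den}(x)-1$ for two things. It forces type (1) when the denominators are equal, and it rules out separation exactly at $\mathrm{den}(x)$ when $\mathrm{den}(x)<\mathrm{den}(y)$; the paper does the latter directly via $\lfloor py\rfloor\le\lfloor (p-1)y\rfloor+1=\lfloor (p-1)x\rfloor+1=\lfloor px\rfloor$ rather than by your empty-interval contradiction. Your separate treatment of $p=1$ (i.e.\ $x=0$) is slightly more careful than the paper, whose equality $\lfloor (p-1)x\rfloor+1=\lfloor px\rfloor$ fails when $x=0$.
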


 For $x, y \in \mathbb{Q}$ satisfying $0 \le x < y \le 1$,
  if they satisfy the $n$-th relation in Lemma~\ref{l1}, we say that $x$ and $y$ are of type $(n)$ (or that $x$ and $y$ have type $(n)$).

Our main theorem is as follows.

\begin{thm}\label{mt2}
Let $x, y \in \mathbb{Q}$ satisfy $0 \le x < y \le 1$.
\begin{enumerate}
\item[\rm (1)] If $x$ and $y$ are of type (1) and $a \in \mathbb{Z}_{>0}$ satisfies $a \ge 2$, then
\[
\mathcal{L}([\phi_{a,a+1}(G(x))]) < \mathcal{L}([\phi_{a,a+1}(G(y))]).
\]
\item[\rm (2)] If $x$ and $y$ are of type (2) and $a \in \mathbb{Z}_{>0}$ satisfies $a \ge 1$, then
\[
\mathcal{L}([\phi_{a,a+1}(G(x))]) < \mathcal{L}([\phi_{a,a+1}(G(y))]).
\]
\item[\rm (3)] If $x$ and $y$ are of type (3) and $a \in \mathbb{Z}_{>0}$ satisfies $a \ge 2$, then
\[
\mathcal{L}([\phi_{a,a+1}(G(x))]) > \mathcal{L}([\phi_{a,a+1}(G(y))]).
\]
\end{enumerate}
\end{thm}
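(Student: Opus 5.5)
We will reduce everything to comparing finite words through the standard formula for the Lagrange constant of a purely periodic continued fraction. For rational $x=p/q$ in lowest terms, $G(x)$ is purely periodic with period the Christoffel-type word $w_x=G(x,1)\cdots G(x,q)$, which has length $q$ and contains $p$ ones. Hence $[\phi_{a,b}(G(x))]$ is purely periodic with period $W_x=\phi_{a,b}(w_x)$, and
\[
\mathcal{L}([\phi_{a,b}(G(x))])=\max_{\sigma}\Bigl(\sigma_{k}+[0;\sigma_{k+1},\sigma_{k+2},\dots]+[0;\sigma_{k-1},\sigma_{k-2},\dots]\Bigr),
\]
where the maximum runs over the positions $k$ of the bi-infinite periodic word $\sigma=\cdots W_xW_x\cdots$. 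The first step is to locate this maximum. Because $b=a+1$ is the larger digit and a continued fraction is compared lexicographically with alternating signs, the maximum is attained at a $b$ lying inside a block $bb$. Among such positions, the optimal one is selected by the balanced (Sturmian) structure of the conjugates of $w_x$: it is the position where the continuation to the right and, reversed, to the left is extremal in the alternating order. The doubling $0\mapsto aa$, $1\mapsto bb$ makes the alternating order on $\phi_{a,b}$-images correspond to the ordinary lexicographic order on the underlying $\{0,1\}$-words. Using this, we write $\mathcal{L}_x=b+[0;\overline{U_x}]+[0;\overline{V_x}]$ explicitly, with $U_x,V_x$ given by the lexicographically extremal conjugates of $w_x$ (the standard words).

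Next we use Lemma~\ref{l1} to compare $w_x$ and $w_y$. In type (2), $w_x$ is a prefix of $w_y$ (more precisely, $G(y)$ agrees with $G(x)$ on the first $\mathrm{den}(x)$ letters), so the two expansions agree for a long stretch and first differ where $G(y)$ continues with a letter pushing toward $1$. A single first-difference comparison of continued fractions then gives $\mathcal{L}_x<\mathcal{L}_y$. This argument uses only the sign of the first discrepancy, which explains why it holds for every $a\ge1$. In type (3), the roles reverse: $G(x)$ and $G(y)$ agree before $\mathrm{den}(y)$ and then $G(y)$ has the larger letter at $j=\mathrm{den}(y)$. However, the periodic repetition of $w_y$ means that the maximizing position for $y$ sees its own period returning sooner. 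We have to show that this effect dominates, which gives $\mathcal{L}_x>\mathcal{L}_y$. In type (1), there is an early index $n$ below both denominators where $y$ has more ones. Here we compare the two sums term by term, with the first discrepancy occurring on one side and possibly an opposite discrepancy on the other side.

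The main obstacle is types (1) and (3). In these cases the left tail and the right tail can differ in opposite directions, so a first-difference argument is insufficient and we need quantitative bounds. The plan is to bound the effect of a discrepancy at depth $m$ in $[0;\cdots]$ using continuants: a change of one in a partial quotient at depth $m$ moves the value by an amount between $c_1(a)/q_m^2$ and $c_2(a)/q_m^2$, where $q_m$ is the continuant. We then show that the favorable discrepancy occurs at a depth at least two, or at least one full block, earlier than any unfavorable one. This requires the ratio of consecutive continuants to be large enough that the earlier term beats the entire tail. With partial quotients $\ge a$, each step multiplies $q_m$ by roughly $a+\tfrac1{a}$, and we expect the required inequality, something like $(a+1)^2>a+2+\text{small}$ with an error budget, to hold exactly when $a\ge2$. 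Checking this numeric inequality carefully, including the boundary configurations with $x=0$ or $y=1$ that are treated separately as the pair $(0,1)$, is the step where we expect most of the work to be.
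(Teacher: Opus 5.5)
Locating the maximum at the second letter $a_0$ of the block $a_{-1}a_0=\phi_{a,a+1}(G(\cdot,0))=bb$, so that $\mathcal{L}=b+[0;a_1,a_2,\dots]+[0;a_{-1},a_{-2},\dots]$, is the paper's Theorem~\ref{t3}. You should pin down this exact position, since all the depth bookkeeping depends on it. The comparison steps, which are the actual content of the theorem, do not work as proposed, starting with type (2).

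\textbf{Type (2).} The sign of one first discrepancy decides nothing, because $\mathcal{L}$ is a sum of two separate continued fractions and here the two sides move in opposite directions.
\begin{itemize}
\item The forward discrepancy you describe is at the least $m>n=\mathrm{den}(x)$ with $G(y,m)=1>G(x,m)$. It sits at odd depth $2m-1$ of $[0;a_1,a_2,\dots]$, so it makes $y$'s right-hand part \emph{smaller}. Taken alone, it gives the wrong inequality.
\item The favorable discrepancy is on the left. Since $nx\in\mathbb{Z}$ we have $G(x,-n+1)=0$, while $G(y,-n+1)=G(y,n)=1$. This enters $[0;a_{-1},a_{-2},\dots]$ at even depth $2n-2$.
\end{itemize}
You still have to show that this earlier left gain beats the later right loss. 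The paper does this quantitatively, for $a\ge2$ and separately for $a=1$: it bounds $R'$ from below and $|R|=|x_1-y_1|$ from above, and compares $Q_1'$ with $Q_1$ and $Q_2'$ with $Q_2$. So ``only the sign matters'' is not why type (2) holds for $a=1$.

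\textbf{Types (1) and (3).} Your key claim, that the favorable discrepancy is at least two letters (or a block) earlier than any unfavorable one, is false: the gap is exactly one letter.
\begin{itemize}
\item In type (1), the left expansions first differ at depth $2n-2$ (the block of $G(\cdot,-(n-1))=G(\cdot,n)$), and the right ones at depth $2n-1$.
\item In type (3), the right expansions differ at depth $2n-1$. The left ones agree through depth $2n-1$ but can already differ at depth $2n$; for example, with $x=1/3$, $y=1/2$ we have $G(x,-2)=0\neq1=G(y,-2)$.
\end{itemize}
More fundamentally, you cannot compare continuants of the left and right expansions until you know how these two different words are related. The missing idea is the reflection identity $G(x,j)=G(x,-j+1)$ for $j\not\equiv0,1\pmod{\mathrm{den}(x)}$ (Lemma~\ref{l2}(7)). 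It shows that, for both $x$ and $y$, the left expansion begins $(a+1)\,M$ and the right one begins $a\,a\,M$, with the \emph{same} middle block $M=a_3\cdots a_{2n-2}$, followed by the discrepancy blocks.

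In type (1), the whole advantage is then the ratio of the continuants of $a\,a\,M$ and $(a+1)\,M$. This ratio lies in $[(a^2+1)/(a+1),(a^2+a+1)/(a+2))$: about $1.7$ for $a=2$, and exactly $1$ for $a=1$. Numerically, for $x=1/3$, $y=2/3$, the left gain beats the right loss by a factor of only about $3$ when $a=2$ and about $1.1$ when $a=1$. Bounds $c_1(a)/q_m^2\le|\Delta|\le c_2(a)/q_m^2$ with unspecified constants cannot resolve a margin of that size.

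The paper instead proceeds as follows:
\begin{itemize}
\item It writes all four continued fractions through the single matrix of $M$.
\item It uses $\det M=1$ to get closed forms for both differences, with numerators $R,R'$ depending only on the tails $x_i,y_i$.
\item It compares the denominators coefficient by coefficient via $x_1<(1+2/a)x_2$ and $y_1<(1+2/a)y_2$; $a=2$ needs the sharper ratio $14/9$.
\end{itemize}
That computation is where $a\ge2$ genuinely enters. Your inequality $(a+1)^2>a+2+\text{small}$ is not derived from anything and does not capture this mechanism.
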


\begin{example}
We illustrate each type in Lemma~\ref{l1} and Theorem~\ref{mt2} with explicit numerical examples. In particular, we highlight the classical case $a = 1$ corresponding to the original Markoff substitution $\phi_{1,2}$:
\begin{enumerate}
\item[\rm (1)] Consider $x = 1/3$ and $y = 3/4$, so that $x < y$. The pair $(x, y)$ is of type (1). 
Indeed, taking $n = 2$, we have $n < \min(\mathrm{den}(x), \mathrm{den}(y)) = \min(3, 4)$ and
\[
\lfloor 2 \cdot (1/3) \rfloor = 0 < 1 = \lfloor 2 \cdot (3/4) \rfloor.
\]
For $a = 2$, Theorem~\ref{mt2} (1) yields
\[
\mathcal{L}([\phi_{2,3}(G(1/3))]) < \mathcal{L}([\phi_{2,3}(G(3/4))]).
\]

\item[\rm (2)] Consider $x = 1/2$ and $y = 2/3$. The pair $(x, y)$ is of type (2). 
Indeed, we have $\mathrm{den}(x) = 2 < 3 = \mathrm{den}(y)$, and for $1 \le j \le 2$,
\[
\lfloor 1 \cdot (1/2) \rfloor = \lfloor 1 \cdot (2/3) \rfloor = 0 \quad \text{and} \quad \lfloor 2 \cdot (1/2) \rfloor = \lfloor 2 \cdot (2/3) \rfloor = 1.
\]
Since Theorem~\ref{mt2} (2) holds for all $a \ge 1$, we may take the original Markoff substitution $\phi_{1,2}$ ($a = 1$), which gives
\[
\mathcal{L}([\phi_{1,2}(G(1/2))]) < \mathcal{L}([\phi_{1,2}(G(2/3))]).
\]

\item[\rm (3)] Consider $x = 1/3$ and $y = 1/2$. The pair $(x, y)$ is of type (3). 
Indeed, we have $\mathrm{den}(x) = 3 > 2 = \mathrm{den}(y)$, and
\[
\lfloor 1 \cdot (1/3) \rfloor = \lfloor 1 \cdot (1/2) \rfloor = 0, \quad \text{while} \quad \lfloor 2 \cdot (1/3) \rfloor = 0 < 1 = \lfloor 2 \cdot (1/2) \rfloor.
\]
For $a = 2$, Theorem~\ref{mt2} (3) yields the reversed inequality
\[
\mathcal{L}([\phi_{2,3}(G(1/3))]) > \mathcal{L}([\phi_{2,3}(G(1/2))]).
\]
\end{enumerate}
\end{example}

Several ordering properties of Markoff numbers indexed by rational numbers have been studied in the literature (see, e.g., \cite{Aigner2013, Lee2023,Rabideau2020}). In contrast, our approach relies on a combinatorial order on the associated mechanical words.
While various generalizations of Markoff numbers and Markoff-type equations have been actively studied from algebraic and cluster-algebraic perspectives (see, e.g., \cite{Banaian2025,Gyoda2025}), our approach takes a different, combinatorial route: we generalize the continued-fraction construction through mechanical words and study the resulting Lagrange constants directly.

The paper is organized as follows. In Section~\ref{sec:prelim}, we prove Lemma~\ref{l1} and recall basic properties of the sequence $\{\lfloor nx \rfloor - \lfloor (n-1)x \rfloor\}$ for rational $x$. In Section~\ref{sec:lagrange}, we establish explicit formulas for calculating the Lagrange constants of $[\phi_{a,b}(G(x))]$. Section~\ref{sec:main} is devoted to the proof of our main theorems, which is divided into three parts according to the arithmetic classification. Finally, in Section~\ref{sec:conjecture}, supported by extensive numerical evidence, we propose a general conjecture regarding the ordering of Lagrange constants for arbitrary $b > a$, identifying $b = a^2 + 2$ as a sharp threshold for a phase transition in the ordering behavior.

\section{Rational Mechanical Sequences}\label{sec:prelim}
Mechanical sequences provide an arithmetic description of binary sequences associated with rotations. We recall here some basic properties of mechanical sequences, for example, \cite{Lothaire2002}.
In this section, we establish several lemmas concerning rational slopes that will be used in our study.
First, we give the proof of Lemma \ref{l1}.
For $u \in \mathbb{Q}$, let $\mathrm{den}(u)$ and $\mathrm{num}(u)$ denote its denominator and numerator, respectively.
Note that we assume $\mathrm{den}(u) \ge 1$.

\begin{proof}[Proof of Lemma \ref{l1}]
First, assume that $(x,y)\ne (0,1)$ and $\mathrm{den}(x) = \mathrm{den}(y) = K$.
Then, we have
\[
\lfloor Kx \rfloor = \mathrm{num}(x) < \mathrm{num}(y) = \lfloor Ky \rfloor,
\]
which implies that
\[
\lfloor (K-1)x \rfloor = \lfloor Kx \rfloor - 1 < \lfloor Ky \rfloor - 1 = \lfloor (K-1)y \rfloor.
\]
Thus, $x$ and $y$ satisfy relation (1).

Next, assume that $\mathrm{den}(x) \ne \mathrm{den}(y)$.
Since $x < y$, there exists a natural number $m$ such that $\lfloor mx \rfloor < \lfloor my \rfloor$; in particular, let $m$ be the minimal such number.
If $m < \min(\mathrm{den}(x), \mathrm{den}(y))$, then $x$ and $y$ satisfy relation (1).
Now, suppose that $m \ge \min(\mathrm{den}(x), \mathrm{den}(y))$.
Assume further that $\mathrm{den}(x) < \mathrm{den}(y)$.
By the minimality of $m$, we have
\[
\lfloor jx \rfloor = \lfloor jy \rfloor \quad \text{for all } 1 \le j < \mathrm{den}(x).
\]
In particular, $\lfloor (\mathrm{den}(x)-1)x \rfloor = \lfloor (\mathrm{den}(x)-1)y \rfloor$.
Since $x < y$, we obviously have $\lfloor \mathrm{den}(x)x \rfloor \le \lfloor \mathrm{den}(x)y \rfloor$.
On the other hand,
\[
\lfloor \mathrm{den}(x)y \rfloor \le \lfloor (\mathrm{den}(x)-1)y \rfloor + 1 = \lfloor (\mathrm{den}(x)-1)x \rfloor + 1 = \lfloor \mathrm{den}(x)x \rfloor.
\]
Therefore, we obtain $\lfloor \mathrm{den}(x)x \rfloor = \lfloor \mathrm{den}(x)y \rfloor$.
Thus, $x$ and $y$ satisfy relation (2).

Next, assume that $\mathrm{den}(x) > \mathrm{den}(y)$.
By the minimality of $m$, we have
\[
\lfloor jx \rfloor = \lfloor jy \rfloor \quad \text{for all } 1 \le j < \mathrm{den}(y).
\]
Since $\mathrm{den}(y)y = \lfloor \mathrm{den}(y)y \rfloor > \mathrm{den}(y)x$, it holds that 
\[
\lfloor \mathrm{den}(y)x \rfloor < \lfloor \mathrm{den}(y)y \rfloor.
\]
Thus, $x$ and $y$ satisfy relation (3).

\end{proof}

Let $x \in \mathbb{Q} \cap (0, 1)$.
Below, we state several fundamental properties of $\{G(x, n)\}_{n\in \mathbb{Z}}$ that will be required later.

\begin{lem}\label{l2}
\begin{enumerate}
\item[\rm (1)] $\{G(x,n)\}_{n\in\mathbb{Z}}$ is a purely periodic sequence with period $\mathrm{den}(x)$.
\item[\rm (2)] $G(x,1)=0$ and $G(x,0)=1$.
\item[\rm (3)] For $n,m\in\mathbb{Z}$, we have
\[
G(x,n+j)=G(x,m+j)\quad\text{for all }j\in\mathbb{Z}_{\geq 0}
\]
if and only if
\[
n\equiv m\pmod{\mathrm{den}(x)}.
\]
\item[\rm (4)] If $m\not\equiv 1\pmod{\mathrm{den}(x)}$, then there exists
$j\in\mathbb{Z}_{\geq 0}$ such that
\[
G(x,1+j)<G(x,m+j),
\]
while
\[
G(x,1+k)=G(x,m+k)
\]
for all $0\leq k<j$.
\item[\rm (5)] If $m\not\equiv 0\pmod{\mathrm{den}(x)}$, then there exists
$j\in\mathbb{Z}_{\geq 0}$ such that
\[
G(x,-j)>G(x,m-j),
\]
while
\[
G(x,-k)=G(x,m-k)
\]
for all $0\leq k<j$.
\item[\rm (6)]
Let $m \in \mathbb{Z}$. Then exactly one of the following two conditions holds:
\begin{enumerate}
\item[\rm (i)] $G(x, -j) = G(x, m+j)$ for all $j \in \mathbb{Z}_{\ge 0}$;
\item[\rm (ii)] there exists $j \in \mathbb{Z}_{\ge 0}$ such that
\[
G(x, -j) > G(x, m+j),
\]
while
\[
G(x, -k) = G(x, m+k)
\]
for all $0 \le k < j$.
\end{enumerate}
\item[\rm (7)] 
Let $x \in \mathbb{Q} \cap (0, 1)$ and $j \in \mathbb{Z}$. Then
\[
G(x, j) = G(x, -j+1) \quad \text{if and only if} \quad j \not\equiv 0,1 \pmod{\mathrm{den}(x)}.
\]
\item[\rm (8)] 
Let $x \in \mathbb{Q} \cap (0, 1)$ and $j \in \mathbb{Z}$. For any integer $n$ satisfying $0 < n < \operatorname{den}(x)$, there exists an integer $j > n$ such that
\[
G(x, j) > G(x, -j+1),
\]
while
\[
G(x, k) = G(x, -k+1)
\]
for all integers $k$ with $n < k < j$.

\end{enumerate}
\end{lem}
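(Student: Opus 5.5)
Throughout, write $x=p/q$ in lowest terms with $q=\mathrm{den}(x)\ge 2$, so that $G(x,n)=\lfloor np/q\rfloor-\lfloor (n-1)p/q\rfloor$ for all $n\in\mathbb{Z}$. The plan is to reduce all eight items to two elementary facts. The first is that $G(x,n)$ depends only on $n \bmod q$, since replacing $n$ by $n+q$ adds $p$ to both floors. The second is the symmetry obtained from $\lfloor -t\rfloor=-\lfloor t\rfloor-1$ for $t\notin\mathbb{Z}$. This gives $G(x,-n+1)=G(x,n)$ whenever $q\nmid n$ and $q\nmid n-1$. When $n\equiv 0$ or $1 \pmod q$ the two values are swapped, namely $G(x,0)=G(x,q)=1$ and $G(x,1)=0$ (because $0<x<1$). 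With these, (1), (2) and (7) are immediate.

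For (3), the ``if'' direction follows from periodicity. For ``only if'', a shift $d=m-n$ fixing the sequence must be a period. The sequence $G(x,\cdot)$ is the Christoffel-type word of slope $p/q$ with $\gcd(p,q)=1$, whose minimal period is exactly $q$. To see this, note that a period $d$ forces $\lfloor (n+d)x\rfloor-\lfloor nx\rfloor$ to be constant in $n$. This means $\{nx\}+\{dx\}<1$ for all $n$ or $\ge 1$ for all $n$. Since $\{nx\}$ runs over all of $\{0,1/q,\dots,(q-1)/q\}$, this is only possible if $\{dx\}=0$, i.e.\ $q\mid d$.

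Item (4) then follows from (3). The tails from positions $1$ and $m$ differ, so take $j$ to be the first index with $G(x,1+j)\ne G(x,m+j)$. We must show the inequality has the stated direction. The plan is to compare the two tails as words: $G(x,1),G(x,2),\dots$ is the lexicographically smallest shift of the rotation coding. Indeed, $\lfloor (n+k)x\rfloor-\lfloor nx\rfloor$ equals $\lfloor kx+\{nx\}\rfloor$, which is monotone in $\{nx\}$ and minimized at $\{nx\}=0$, i.e.\ $n\equiv 0$. Hence the prefix sums of the shift starting at $1$ are pointwise $\le$ those of any other shift. At the first difference, this forces $G(x,1+j)=0<1=G(x,m+j)$. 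Item (5) is the mirror image: reading backwards from $0$ corresponds to $\{nx\}$ maximal, using $\lceil\cdot\rceil$-type sums, or simply applying (4) together with the symmetry $n\mapsto -n+1$.

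For (6), compare $G(x,-j)$ with $G(x,m+j)$. By the symmetry, $G(x,-j)=G(x,j+1)$ except when $j+1\equiv 0,1$, where the backward word carries the larger value. So the backward word from $0$ is the reversal-image, which by (5) is the lexicographically \emph{largest} sequence among all readings. Therefore either it agrees with $G(x,m+\cdot)$ forever, giving (i), or it is larger at the first discrepancy, giving (ii). Exclusivity is trivial.

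For (8), fix $0<n<q$. For $k$ strictly between $n$ and the next multiple-or-one-more of $q$, (7) gives equality. The first $j>n$ with $j\equiv 0$ or $1\pmod q$ is $j=q$, since $n<q$ and $n\ge1$ rules out hitting $\equiv1$ earlier. At $j=q$ we have $G(x,q)=1>0=G(x,1-q)$. The step I expect to need the most care is the minimal-period and lexicographic-extremality argument behind (3)–(5); everything else is bookkeeping with the two basic facts.
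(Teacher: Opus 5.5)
Your arguments for (1)--(4), (7) and (8) are sound. Where the paper gives a proof, you use the same one: the prefix sums $\lfloor (1+j)x+\{(m-1)x\}\rfloor\ge\lfloor (1+j)x\rfloor$ for (4), the floor/ceiling symmetry for (7), and $j=\mathrm{den}(x)$ for (8). Your minimal-period argument for (3) correctly fills in what the paper omits.

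The gap is in (6), where two things go wrong.

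First, the two exceptional positions do not both favour the backward word. For $j\equiv 0\pmod q$ you get $G(x,-j)=G(x,0)=1>0=G(x,j+1)$. But for $j\equiv -1\pmod q$ you get $G(x,-j)=G(x,1)=0<1=G(x,q)=G(x,j+1)$. With $w=G(x,2)\cdots G(x,q-1)$, the backward word from $0$ is $(1w0)^\infty$ and the forward word from $1$ is $(0w1)^\infty$, so neither dominates the other pointwise.

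Second, and decisively, (5) only says that $G(x,-\cdot)$ beats the \emph{backward} readings $G(x,m-\cdot)$. Statement (6) compares it with the \emph{forward} readings $G(x,m+\cdot)$. To transport (5) you would need every forward reading to be some backward reading, i.e.\ that $n\mapsto G(x,-n)$ is a shift of $n\mapsto G(x,n)$. That is true: it is Lemma~\ref{suitableindex}, which the paper proves via $kp\equiv 1\pmod q$. But it is not one of your two basic facts, and it does not follow from the pointwise symmetry (7), which fails exactly at the period boundaries. As written, the step ``by (5) is the lexicographically largest sequence among all readings'' is unjustified.

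The simplest repair is to run the prefix-sum argument of (4) directly. On the backward side, $\sum_{k=0}^{j}G(x,-k)=-\lfloor -(j+1)x\rfloor=\lceil (j+1)x\rceil$. On the forward side, $\sum_{k=0}^{j}G(x,m+k)=\lfloor (j+1)x+\{(m-1)x\}\rfloor\le\lfloor (j+1)x+\tfrac{q-1}{q}\rfloor=\lceil (j+1)x\rceil$. The inequality holds because $(j+1)x\in\frac1q\mathbb{Z}$ and $\{(m-1)x\}\le\frac{q-1}{q}$. Hence at the first index where the two readings differ, the backward one is $1$ and the forward one is $0$, which is (ii); if they never differ, (i) holds. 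This is the ``similar manner to (5)'' the paper alludes to.

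A related caution for (5): drop the alternative ``apply (4) together with the symmetry $n\mapsto -n+1$.'' That transport sends the lexicographically smallest word $(0w1)^\infty$ to the largest one $(1w0)^\infty$. For $x=2/5$ it is neither order-preserving nor order-reversing on the five conjugates. So it does not carry the minimality in (4) to the maximality in (5) without an extra device, such as complementation and passage to slope $1-x$. Your ceiling-sum route is the right one and matches the paper. Just note that the backward sum from $m$ is $\lceil (j+1)x-\{mx\}\rceil$, so the extremal starting point is the one with $\{mx\}=0$, not one with $\{nx\}$ maximal.
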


\begin{proof}
Since statements (1), (2) and (3) are straightforward, we omit their proofs.
We prove (4). Suppose that $m \not\equiv 1 \pmod{\mathrm{den}(x)}$.
By statement (3), there exists a non-negative integer $j \in \mathbb{Z}_{\ge 0}$ such that
\[
G(x, 1+j) \ne G(x, m+j).
\]
Let $j$ be the minimal such non-negative integer. Then, for all $0 \le k < j$, we have
\[
G(x, 1+k) = G(x, m+k).
\]
In this case, we have
\begin{align*}
\sum_{k=0}^j G(x, 1+k) &= \lfloor (1+j)x \rfloor, \\
\sum_{k=0}^j G(x, m+k) &= \lfloor (m+j)x \rfloor - \lfloor (m-1)x \rfloor \\
&= \lfloor (1+j)x + (m-1)x \rfloor - \lfloor (m-1)x \rfloor \\
&= \left\lfloor (1+j)x + \lfloor (m-1)x \rfloor + \{(m-1)x\} \right\rfloor - \lfloor (m-1)x \rfloor \\
&= \left\lfloor (1+j)x + \{(m-1)x\} \right\rfloor \\
&\ge \lfloor (1+j)x \rfloor,
\end{align*}
where $\{\alpha\}$ denotes the fractional part of $\alpha$.
Since $G(x, 1+k) = G(x, m+k)$ for all $0 \le k < j$, subtracting these sums yields
\[
G(x, m+j) \ge G(x, 1+j).
\]
Combined with $G(x, 1+j) \ne G(x, m+j)$, we conclude that
\[
G(x, m+j) > G(x, 1+j).
\]

We now prove (5). First, if $G(x, m) = 0$, the statement holds trivially for $j = 0$; thus, we may assume that $G(x, m) = 1$.

By (3), there exists a integer $j > 0$ such that
\[
G(x, -j) \ne G(x, m-j),
\]
and for all $0 \le k < j$, we have
\[
G(x, -k) = G(x, m-k).
\]

Note that
\[
\sum_{k=0}^j G(x, -k) = \sharp \!\left( ((-j-1)x, 0] \cap \mathbb{Z} \right)
\]
and
\begin{align*}
\sum_{k=0}^j G(x, m-k) 
&= \sum_{k=0}^j \left( \lfloor (m-k)x \rfloor - \lfloor (m-k-1)x \rfloor \right) \\
&= \sum_{k=0}^j \left( \lfloor (m-k)x - \lfloor mx \rfloor \rfloor - \lfloor (m-k-1)x - \lfloor mx \rfloor \rfloor \right) \\
&= \sharp \!\left( \left( (m-j-1)x - \lfloor mx \rfloor, \, mx - \lfloor mx \rfloor \right] \cap \mathbb{Z} \right).
\end{align*}
Since $0 < mx - \lfloor mx \rfloor < 1$, the above set has the same number of integer points as the interval with the upper bound replaced by $0$; that is,
\[
\sum_{k=0}^j G(x, m-k) = \sharp \!\left( \left( (m-j-1)x - \lfloor mx \rfloor, \, 0 \right] \cap \mathbb{Z} \right).
\]
Furthermore, since $(m-j-1)x - \lfloor mx \rfloor = (-j-1)x + (mx - \lfloor mx \rfloor) > (-j-1)x$, we obtain
\[
\sum_{k=0}^j G(x, -k) \ge \sum_{k=0}^j G(x, m-k).
\]
Consequently, we conclude that
\[
G(x, -j) > G(x, m-j).
\]
Since statement (6) can be proved in a similar manner to (5), we omit the proof.

We now prove (7).
By definition, we have
\[
G(x,j) = \lfloor jx \rfloor - \lfloor (j-1)x \rfloor = \lceil (-j+1)x \rceil - \lceil -jx \rceil.
\]

If $j \not\equiv 0, 1 \pmod{\operatorname{den}(x)}$, then neither $(-j+1)x$ nor $-jx$ is an integer. Thus, the ceiling functions coincide with the floor functions, yielding
\[
\lceil (-j+1)x \rceil - \lceil -jx \rceil = \lfloor (-j+1)x \rfloor - \lfloor -jx \rfloor = G(x, -j+1).
\]

Next, suppose $j \equiv 0 \pmod{\operatorname{den}(x)}$. In this case, $jx \in \mathbb{Z}$, which implies
\[
G(x,j) = \lfloor jx \rfloor - \lfloor (j-1)x \rfloor = -\lfloor -x \rfloor = 1.
\]
On the other hand,
\[
G(x,-j+1) = \lfloor (-j+1)x \rfloor - \lfloor -jx \rfloor = \lfloor x \rfloor = 0,
\]
so $G(x,j) \ne G(x,-j+1)$.

Finally, suppose $j \equiv 1 \pmod{\operatorname{den}(x)}$. Then $(j-1)x \in \mathbb{Z}$, and similar computations yield
\[
G(x,j) = \lfloor jx \rfloor - \lfloor (j-1)x \rfloor = \lfloor x \rfloor = 0.
\]
On the other hand, we have
\[
G(x,-j+1) = \lfloor (-j+1)x \rfloor - \lfloor -jx \rfloor = -\lfloor -x \rfloor = 1,
\]
which again shows $G(x,j) \ne G(x,-j+1)$. This completes the proof of (7).

We now prove (8).
It suffices to set $j = \mathrm{den}(x)$. From the proof of (7), we directly have
\[
G(x, \mathrm{den}(x)) = 1 > 0 = G(x, -\mathrm{den}(x)+1).
\]
Furthermore, by (7), for any integer $k$ with $n < k < j = \mathrm{den}(x)$, since $k \not\equiv 0, 1 \pmod{\mathrm{den}(x)}$, we obtain
\[
G(x, k) = G(x, -k+1),
\]
which completes the proof.

\end{proof}

Although it is well known that the sequences $\{G(x,n)\}_{n \in \mathbb{Z}}$ and $\{G(x,-n)\}_{n \in \mathbb{Z}}$ are essentially the same, we provide a proof for completeness.

\begin{lem}\label{suitableindex}
Let $x \in \mathbb{Q} \cap [0, 1]$.
The sequence $\{G(x,-n)\}_{n \in \mathbb{Z}}$ is obtained by shifting the sequence $\{G(x,n)\}_{n \in \mathbb{Z}}$ by a suitable index.
\end{lem}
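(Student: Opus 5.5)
The plan is to find an explicit shift $c$ with $G(x,-n) = G(x,n+c)$ for all $n \in \mathbb{Z}$, and to obtain it by turning the floor identity already used in the proof of Lemma~\ref{l2}(7) into an exact statement. The boundary cases come first. For $x = 0$ the sequence is identically $0$, and for $x = 1$ it is identically $1$. In both cases it is constant, so any shift works.

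For $x \in \mathbb{Q}\cap(0,1)$, the proof of Lemma~\ref{l2}(7) shows
\[
G(x,j) = \lceil (-j+1)x \rceil - \lceil -jx \rceil ,
\]
and that this agrees with $G(x,-j+1)$ except when $j \equiv 0,1 \pmod{\mathrm{den}(x)}$. So the reflection $j \mapsto -j+1$ alone is not enough, because the two sequences differ exactly at the positions where $jx$ or $(j-1)x$ is an integer. To fix this, write $x = p/q$ with $\gcd(p,q)=1$. Since $\gcd(p,q)=1$, there is an integer $r$ with $rp \equiv -1 \pmod q$, so that $rx = \lfloor rx \rfloor + (q-1)/q$. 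Then for every integer $n$,
\[
\lfloor (n+r)x \rfloor = \lfloor rx \rfloor + \Big\lfloor \tfrac{np + q - 1}{q} \Big\rfloor = \lfloor rx \rfloor + \Big\lceil \tfrac{np}{q} \Big\rceil = \lfloor rx \rfloor + \lceil nx \rceil .
\]
This holds because $\lfloor (m+q-1)/q \rfloor = \lceil m/q \rceil$ for every integer $m$.

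Taking differences gives
\[
G(x,n+r) = \lceil nx \rceil - \lceil (n-1)x \rceil = -\lfloor -nx \rfloor + \lfloor -(n-1)x \rfloor = \lfloor (-n+1)x \rfloor - \lfloor -nx \rfloor = G(x,-n+1).
\]
Replacing $n$ by $n+1$ then yields $G(x,-n) = G(x,n+r+1)$ for all $n$. So the required shift is $c = r+1$, where $r$ is an inverse of $-p$ modulo $q$, and it is well defined modulo $\mathrm{den}(x)$ by Lemma~\ref{l2}(1).

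The only step needing care is the ceiling identity $\lfloor (m+q-1)/q\rfloor = \lceil m/q\rceil$ together with the choice of residue $rx \equiv (q-1)/q \pmod 1$. The rest is bookkeeping with Lemma~\ref{l2}(1). The case $q=1$ does not arise for $x\in(0,1)$, since $q=1$ would force $x \in\{0,1\}$, which the boundary cases already cover.
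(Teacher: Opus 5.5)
Your proof is correct and is essentially the paper's argument. The paper shifts the ceiling sequence $\lceil nx\rceil-\lceil (n-1)x\rceil$ by $k$ with $kp\equiv 1 \pmod q$ to recover the floor sequence $G(x,n)$. You run the same modular-inverse trick in the mirror direction, shifting the floor sequence by $r\equiv -k$ to obtain the ceiling one, and you arrive at the same shift $c\equiv 1-k \pmod q$.
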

\begin{proof}
If $x = 0$ or $x = 1$, the claim of the lemma holds trivially.
Suppose that $0 < x < 1$.
For any $n \in \mathbb{Z}$, since
\[
\lfloor nx \rfloor - \lfloor (n-1)x \rfloor = \lceil (-n+1)x \rceil - \lceil -nx \rceil,
\]
it follows that
\[
\{G(x,-n)\}_{n \in \mathbb{Z}} = \{\lceil (n+1)x \rceil - \lceil nx \rceil\}_{n \in \mathbb{Z}}.
\]
Therefore, it suffices to show that shifting the sequence $\{\lceil nx \rceil - \lceil (n-1)x \rceil\}_{n \in \mathbb{Z}}$ by a suitable index yields $\{G(x,n)\}_{n \in \mathbb{Z}}$.
Let $x = p/q$, where $p$ and $q$ are coprime positive integers.
Let $k$ be an integer satisfying $kp \equiv 1 \pmod{q}$.
Set $l = (kp - 1)/q \in \mathbb{Z}$.
Then we have
\begin{align*}
\lceil (n+k)x \rceil &= \left\lceil nx + \frac{kp}{q} \right\rceil \\
&= \left\lceil nx + \frac{ql+1}{q} \right\rceil \\
&= \left\lceil nx + \frac{1}{q} \right\rceil + l.
\end{align*}
Therefore, it follows that
\[
\lceil (n+k)x \rceil - \lceil (n-1+k)x \rceil = \left\lceil nx + \frac{1}{q} \right\rceil - \left\lceil (n-1)x + \frac{1}{q} \right\rceil.
\]
Next, we show that
\begin{align}\label{leftlceilnx}
\left\lceil nx + \frac{1}{q} \right\rceil - \left\lceil (n-1)x + \frac{1}{q} \right\rceil = \lfloor nx \rfloor - \lfloor (n-1)x \rfloor.
\end{align}
Suppose that $\lfloor nx \rfloor = \lfloor (n-1)x \rfloor$.
This is equivalent to
\[
nx \in [\lfloor (n-1)x \rfloor, \lfloor (n-1)x \rfloor + 1),
\]
which is further equivalent to
\[
nx \in \left\{ \lfloor (n-1)x \rfloor + \frac{k}{q} \;\middle|\; 0 \le k \le q-1 \right\}.
\]
This holds if and only if
\[
\left\lceil nx + \frac{1}{q} \right\rceil = \left\lceil (n-1)x + \frac{1}{q} \right\rceil.
\]
Therefore, \eqref{leftlceilnx} holds.
This completes the proof of the lemma.

\end{proof}

\begin{lem}\label{exactlyone}
Let $x \in \mathbb{Q} \cap [0, 1]$.
Let $m \in \mathbb{Z}$.
Then, exactly one of the following two conditions holds:
\begin{enumerate}
    \item $G(x, k) = G(x, m - k)$ for all $k\in \mathbb{Z}$.
    \item There exists $j \in \mathbb{Z}_{> 0}$ such that
    \[
    G(x, j) < G(x, m - j)
    \]
    and
    \[
    G(x, k) = G(x, m - k) \quad \text{for all } 0 < k < j.
    \]
\end{enumerate}
\end{lem}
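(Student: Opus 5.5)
The plan is to imitate the prefix-sum argument used in the proof of Lemma~\ref{l2}~(4). First dispose of the degenerate cases $x=0$ and $x=1$: there $G(x,\cdot)$ is constant ($0$ or $1$), so condition~1 holds trivially and condition~2 cannot hold. Assume now $0<x<1$.

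\emph{Exclusivity.} The two conditions cannot both hold, since condition~2 exhibits an index $j>0$ with $G(x,j)\ne G(x,m-j)$.

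\emph{Reduction to a first difference.} Suppose condition~1 fails. I claim there is then some $k>0$ with $G(x,k)\ne G(x,m-k)$. Indeed, if $G(x,k)=G(x,m-k)$ held for all $k>0$, then it would hold for all $k\in\mathbb{Z}$. To see this, take any $k\in\mathbb{Z}$ and choose $k'=k+t\,\mathrm{den}(x)>0$. By Lemma~\ref{l2}~(1) we have $G(x,k)=G(x,k')$ and $G(x,m-k)=G(x,m-k')$, so the identity transfers from $k'$ to $k$. Hence we may let $j>0$ be the minimal positive index with $G(x,j)\ne G(x,m-j)$. It remains to show that $G(x,j)<G(x,m-j)$.

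\emph{Key inequality.} Compare the prefix sums. Telescoping gives
\[
\sum_{k=1}^{j} G(x,k)=\lfloor jx\rfloor .
\]
Setting $A=(m-1)x$, telescoping also gives
\[
\sum_{k=1}^{j} G(x,m-k)=\lfloor (m-1)x\rfloor-\lfloor (m-1-j)x\rfloor=\lfloor A\rfloor-\lfloor A-jx\rfloor .
\]
By superadditivity of the floor function, $\lfloor A-jx\rfloor+\lfloor jx\rfloor\le\lfloor A\rfloor$. Therefore the second sum is at least $\lfloor jx\rfloor$, the first sum.

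\emph{Conclusion.} By minimality of $j$, the terms with $k<j$ agree in the two sums. Subtracting them leaves $G(x,m-j)\ge G(x,j)$. Combined with $G(x,j)\ne G(x,m-j)$, this gives the strict inequality, so condition~2 holds.

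The only step needing any care is the key inequality, and it reduces to the elementary floor inequality above. The other small point is extending agreement from positive $k$ to all of $\mathbb{Z}$, which uses periodicity.
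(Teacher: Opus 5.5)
Your proof is correct, but it takes a more direct route than the paper. The paper proves this lemma in one line. By Lemma~\ref{suitableindex}, $\{G(x,-n)\}_{n\in\mathbb{Z}}$ is a shift of $\{G(x,n)\}_{n\in\mathbb{Z}}$, so $k\mapsto G(x,m-k)$ coincides with $k\mapsto G(x,k+s_0-m)$ for some $s_0$ depending only on $x$. The dichotomy is then read off from Lemma~\ref{l2}(3), which gives agreement everywhere iff $s_0-m\equiv 0 \pmod{\mathrm{den}(x)}$, and Lemma~\ref{l2}(4), which says that otherwise the first disagreement goes in the required direction. You instead compare the prefix sums of $G(x,k)$ and $G(x,m-k)$ directly and close with $\lfloor A-jx\rfloor+\lfloor jx\rfloor\le\lfloor A\rfloor$; this is in effect the argument behind Lemma~\ref{l2}(4) rerun for the reflected sequence. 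It bypasses the shift lemma altogether, whose proof needs the $kp\equiv 1 \pmod{q}$ bookkeeping and a floor/ceiling comparison, and it uses periodicity only to pass from positive indices to all of $\mathbb{Z}$. Your telescoping identity $\sum_{k=1}^{j}G(x,m-k)=\lfloor (m-1)x\rfloor-\lfloor (m-1-j)x\rfloor$, the minimality argument, and the treatment of $x\in\{0,1\}$ all check out. The paper's route is shorter given the machinery already in place, and as a by-product it identifies exactly when condition~1 occurs: a single congruence class of $m$ modulo $\mathrm{den}(x)$. Yours is self-contained and makes the source of the inequality transparent.
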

\begin{proof}
By Lemma \ref{suitableindex}, the sequence $\{G(x,-n)\}_{n \in \mathbb{Z}}$ is a shift of the sequence $\{G(x,n)\}_{n \in \mathbb{Z}}$.
Hence, the claim follows from Lemma \ref{l2} (3) and (4). 
\end{proof}

\section{Lagrange constants} \label{sec:lagrange}

We consider the following regular continued fraction expansion:
\begin{equation*}
[0;a_1,a_2,a_3,\ldots] = \cfrac{1}{a_1 + \cfrac{1}{a_2 + \cfrac{1}{a_3+\cfrac{1}{\ddots}}}},
\end{equation*}
where $a_i \in \mathbb{Z}_{>0}$ for $i = 1, 2, \ldots$.
Let $p_n/q_n = [0; a_1, \ldots, a_n]$ denote the $n$-th convergent.
Then, as is well known, for $n \in \mathbb{Z}_{>0}$, $p_n$ and $q_n$ are defined by
\begin{equation*}
\begin{pmatrix}
p_{n-1} & p_n \\
q_{n-1} & q_n
\end{pmatrix}
=
\begin{pmatrix}
0 & 1 \\
1 & a_1
\end{pmatrix}
\begin{pmatrix}
0 & 1 \\
1 & a_2
\end{pmatrix}
\cdots
\begin{pmatrix}
0 & 1 \\
1 & a_n
\end{pmatrix},
\end{equation*}
where $p_0 = 0$ and $q_0 = 1$.

When a sequence of positive integers $a_1, a_2, \ldots$ is purely periodic, it naturally extends to a doubly infinite sequence $\{a_i\}_{i \in \mathbb{Z}}$. 
For such a sequence, the following fact is well known.

\begin{lem}\label{l3}
For $\alpha = [0; a_1, a_2, \ldots]$, we have
\begin{equation}
\mathcal{L}(\alpha) = \max_{i \in \mathbb{Z}} \left( [0; a_{i-1}, a_{i-2}, \ldots] + a_i + [0; a_{i+1}, a_{i+2}, \ldots] \right).
\end{equation}
\end{lem}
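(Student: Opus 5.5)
We prove the standard formula for purely periodic expansions: for $\alpha=[0;a_1,a_2,\ldots]$ with purely periodic partial quotients of period $N$,
\[
\mathcal{L}(\alpha)=\max_{i\in\mathbb{Z}}\bigl([0;a_{i-1},a_{i-2},\ldots]+a_i+[0;a_{i+1},\ldots]\bigr),
\]
where the expression depends only on $i \bmod N$, so the maximum is over finitely many values and is attained.

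\textbf{Step 1 (Perron's identity).} For $n\ge 1$ write $\alpha_{n+1}=[a_{n+1};a_{n+2},\ldots]$. From the matrix description of the convergents,
\[
\alpha=\frac{p_n\alpha_{n+1}+p_{n-1}}{q_n\alpha_{n+1}+q_{n-1}},
\qquad\text{so}\qquad
q_n\alpha-p_n=\frac{(-1)^n}{q_n\alpha_{n+1}+q_{n-1}}
\]
by the determinant relation $p_{n-1}q_n-p_nq_{n-1}=\pm1$. Hence
\[
\frac{1}{q_n\,|q_n\alpha-p_n|}=\alpha_{n+1}+\frac{q_{n-1}}{q_n}.
\]
Taking the transpose of the matrix product gives
\[
\frac{q_{n-1}}{q_n}=[0;a_n,a_{n-1},\ldots,a_1].
\]
Therefore
\[
\frac{1}{q_n\|q_n\alpha\|}=[0;a_n,\ldots,a_1]+a_{n+1}+[0;a_{n+2},a_{n+3},\ldots]
\]
for all $n$ large enough that $\|q_n\alpha\|=|q_n\alpha-p_n|$ holds, which is true for every $n\ge1$ except in degenerate small cases.

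\textbf{Step 2 (restrict to convergents).} By Legendre's theorem, any $q$ with $q\|q\alpha\|<1/2$ is a convergent denominator $q_n$. Since $\mathcal{L}(\alpha)\ge\sqrt5>2$, only convergent denominators can approach the limsup. So
\[
\mathcal{L}(\alpha)=\limsup_{n\to\infty}\Bigl([0;a_n,\ldots,a_1]+a_{n+1}+[0;a_{n+2},\ldots]\Bigr).
\]
The finite-versus-infinite tail distinction does not matter in the limit.

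\textbf{Step 3 (periodicity).} Set $i=n+1$. Because the sequence is purely periodic, the right part $[0;a_{i+1},\ldots]$ depends only on $i\bmod N$. The finite backward expansion $[0;a_{i-1},\ldots,a_1]$ agrees with the infinite backward expansion $[0;a_{i-1},a_{i-2},\ldots]$ of the doubly infinite periodic extension in its first $i-1$ partial quotients. The two therefore differ by at most $1/(q_{i-2}'q_{i-1}')\to0$, where $q'$ denotes the denominators of the reversed word. Along each residue class $i\equiv r\pmod N$, the quantity thus converges to
\[
F(r)=[0;a_{r-1},a_{r-2},\ldots]+a_r+[0;a_{r+1},\ldots].
\]
Each residue class occurs infinitely often, and there are finitely many classes. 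Hence the limsup equals $\max_r F(r)$, which is the claimed formula.

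\textbf{Main obstacle.} The only delicate step is justifying the convergence of the finite reversed continued fraction to the infinite periodic one. This follows from the standard estimate $|[0;b_1,\ldots,b_k]-[0;b_1,\ldots,b_k,\ldots]|\le 1/q_k^2\to0$, since the partial quotients are $\ge1$ and so $q_k$ grows at least like a Fibonacci sequence.
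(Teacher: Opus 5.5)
The paper does not prove this lemma; it quotes it as a well-known fact about purely periodic expansions, so there is no proof to compare against. Your argument is the standard proof and is correct: Perron's identity $1/(q_n\|q_n\alpha\|)=[0;a_n,\ldots,a_1]+a_{n+1}+[0;a_{n+2},\ldots]$, then reducing the $\limsup$ to convergent denominators, then passing to the limit along residue classes modulo the period. There is one misstatement in Step 2 that you should repair.

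\textbf{The misstatement in Step 2.} Legendre's theorem only says that the rational $p/q$, with $p$ the integer nearest $q\alpha$, equals some convergent $p_n/q_n$. If $\gcd(p,q)=d>1$, then $q=dq_n$ is not a convergent denominator. Such $q$ really can satisfy $q\|q\alpha\|<1/2$; take $d=2$ with $a_{n+1}$ large. So ``any $q$ with $q\|q\alpha\|<1/2$ is a convergent denominator'' is false as written.

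\textbf{Why it does no harm.} For such $q$ you have $\|q\alpha\|=d\|q_n\alpha\|$, so
\[
\frac{1}{q\|q\alpha\|}=\frac{1}{d^2}\cdot\frac{1}{q_n\|q_n\alpha\|}<\frac{a_{n+1}+2}{4}\le\frac{\max_i a_i+2}{4}.
\]
This bound does not exceed $\max_i a_i$, which is strictly below the right-hand side of the lemma. Hence these multiples never approach the $\limsup$.

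\textbf{A cleaner alternative.} Use Lagrange's best-approximation property: $\|q\alpha\|\ge\|q_n\alpha\|$ for $1\le q<q_{n+1}$. This gives $q\|q\alpha\|\ge q_n\|q_n\alpha\|$ whenever $q_n\le q<q_{n+1}$, and hence $\limsup_q=\limsup_n$ directly. It also removes the need to invoke Hurwitz's bound $\mathcal{L}(\alpha)\ge\sqrt5$.

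\textbf{A minor point in Step 1.} The caveat about ``degenerate small cases'' is unnecessary. For every $n\ge1$ you have $|q_n\alpha-p_n|<1/q_{n+1}\le 1/2$, so $\|q_n\alpha\|=|q_n\alpha-p_n|$ always holds.
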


Let $x \in \mathbb{Q} \cap [0; 1]$, and let $a, b \in \mathbb{Z}_{>0}$ satisfy $a < b$.
Set $[\phi_{a,b}(G(x))] = [0; a_1, a_2, \ldots]$.
Since the sequence $a_1, a_2, \ldots$ is purely periodic, we naturally extend it to a doubly infinite sequence $\{a_i\}_{i \in \mathbb{Z}}$.
That is, for each $n \in \mathbb{Z}$, the pair $a_{2n-1} a_{2n}$ is given by
\[
a_{2n-1} a_{2n} = 
\begin{cases}
aa & \text{if } G(x, n) = 0, \\
bb & \text{if } G(x, n) = 1.
\end{cases}
\]
For $n \in \mathbb{Z}$, we define
\[
L(x, n) := [0; a_{n-2}, a_{n-3}, \ldots] + a_{n-1} + [0; a_n, a_{n+1}, \ldots].
\]

\begin{lem}\label{l4}
For $n, m \in \mathbb{Z}$, if $a_{n-1} < a_{m-1}$, then
\[
L(x, n) < L(x, m).
\]
\end{lem}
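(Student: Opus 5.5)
The plan is to use the fact that every partial quotient lies in $\{a,b\}$ with $a<b$, and that the two continued-fraction tails contribute strictly less than $1$ in total. Write
\[
L(x,n) = a_{n-1} + \bigl([0; a_{n-2}, a_{n-3}, \ldots] + [0; a_n, a_{n+1}, \ldots]\bigr).
\]
Since $a<b$ are positive integers, the hypothesis $a_{n-1} < a_{m-1}$ forces $a_{n-1} = a$ and $a_{m-1} = b \ge a+1$. It therefore suffices to show that the sum of the two tails for index $n$ minus the sum of the two tails for index $m$ is strictly less than $b - a$. Since $b-a\ge 1$, it is enough to bound each tail sum appropriately.

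The key estimates use that all partial quotients lie in $\{a,b\}$ with $a \ge 1$. The first step is to bound each tail: for any sequence $c_1, c_2, \ldots$ with $c_i \in \{a,b\}$,
\[
\frac{1}{b+1} < [0; c_1, c_2, \ldots] < \frac{1}{a}.
\]
This holds because $[0;c_1,\ldots] = 1/(c_1 + t)$ with $0<t<1$, the tail being irrational or at least strictly between $0$ and $1$ since the sequence is infinite. Consequently, for index $n$ the two tails sum to less than $2/a$, and for index $m$ the two tails sum to more than $2/(b+1)$.

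This crude bound gives the required difference $< b-a$ only when $2/a - 2/(b+1) \le b-a$. For $a \ge 2$ this is immediate, since $2/a \le 1 \le b-a$. The obstacle is the case $a=1$, where the tails can be close to $1$.

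To handle small $a$ precisely, I would use the doubled structure of $\phi_{a,b}$: the partial quotients come in equal pairs $a_{2k-1}=a_{2k}$. Hence each tail begins with a run of equal digits of length one or two, and the tail following a digit $a$ in position $n-1$ has the form $[0; a, \ldots]$ or $[0;a,a,\ldots]$ on at least one side, and similarly on the other. I would then argue directly. Each tail is strictly less than $1$, so the index-$n$ tail sum is less than $2$. Meanwhile the index-$m$ tails are positive, so the difference of tail sums is strictly less than $2$. When $b-a\ge 2$ this finishes the proof.

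In the remaining case $b=a+1$ with $a=1$, I would refine the estimate using the pairing. The digit $a_{n-1}=a$ belongs to a pair $aa$, so one of the neighbours $a_{n-2}$ or $a_n$ equals $a$, and that tail is $[0;a,\ldots] \le 1/(a+1/(b+1))$. Computing the extremal values with the period-$2$ patterns should give a tail sum at index $n$ below $1 + (\text{tail sum at } m)$. I expect this last numerical comparison to be the only delicate step; I would settle it by bounding $[0;1,c_2,\ldots] < 1/(1+1/3)=3/4$ on the paired side and $<1$ on the other side, against index-$m$ tails each exceeding $1/3$. This gives a difference of less than $7/4 - 2/3 = 13/12$, which is not sharp enough. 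So I would refine further using the second digit of each tail, obtaining $[0;1,1,\ldots]$ or $[0;1,2,\ldots]$-type bounds, until the margin is below $1$.
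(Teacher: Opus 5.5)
Your reduction is correct and is the paper's: $a_{n-1}<a_{m-1}$ forces $a_{n-1}=a$ and $a_{m-1}=b$, so it suffices that the two tails at index $n$ exceed the two tails at index $m$ by less than $b-a$. Your cases $a\ge 2$ and $a=1$, $b\ge 3$ are complete; the paper's case $b\ge a+2$ uses the same trivial bound.

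The gap is the case $(a,b)=(1,2)$. It is the only delicate case, and it is genuinely needed, since Theorem~\ref{t3} and part (2) of Theorem~\ref{mt2} are used with $a=1$. There you never produce a valid estimate. The one bound you compute gives a difference below $13/12$, which you correctly note does not suffice. The rest is a promise to ``refine further until the margin is below $1$.'' As written, the lemma is not proved for $a=1$.

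The missing observation is small. The pairing $a_{2k-1}=a_{2k}$ is a red herring, and it led you to keep the crude bound $1$ on the ``other side.'' The two-digit bound holds for every tail, whatever its first digit. For $c_i\in\{1,2\}$ one has $[0;c_2,c_3,\ldots]>1/3$, hence
$[0;c_1,c_2,\ldots]=1/(c_1+[0;c_2,\ldots])<1/(1+1/3)=3/4$.
Apply this to both tails at $n$, and your bound $>1/3$ to both tails at $m$. The difference is then less than $3/2-2/3=5/6<1$, which finishes the proof.

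The paper does the same thing with the sharp extremal values. Over sequences in $\{a,b\}$, the supremum of $[0;c_1,c_2,\ldots]$ is $[0;\overline{a,b}]$ and the infimum is $[0;\overline{b,a}]$. For $(a,b)=(1,2)$ these equal $\sqrt{3}-1$ and $(\sqrt{3}-1)/2$, so $L(x,n)\le 2\sqrt{3}-1<\sqrt{3}+1\le L(x,m)$.
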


\begin{proof}
Suppose that $a_{n-1} + 2 \le a_{m-1}$.
Then we have
\[
L(x, n) = [0; a_{n-2}, a_{n-3}, \ldots] + a_{n-1} + [0; a_n, a_{n+1}, \ldots]
< 2 + a_{n-1} \le a_{m-1} < L(x, m).
\]
Next, suppose that $a_{n-1} + 1 = a_{m-1}$.
In this case, we have $b = a + 1$, with $a_{n-1} = a$ and $a_{m-1} = b$.
Then, the following bounds hold:
\begin{align}\nonumber
L(x, n) &= [0; a_{n-2}, a_{n-3}, \ldots] + a + [0; a_n, a_{n+1}, \ldots] \\
&\le [0; \overline{a, b}] + a + [0; \overline{a, b}], \label{L(x,n)}\\
\nonumber
L(x, m) &= [0; a_{m-2}, a_{m-3}, \ldots] + a + 1 + [0; a_m, a_{m+1}, \ldots] \\
&\ge [0; \overline{b, a}] + a + 1 + [0; \overline{b, a}].\label{L(x,m)}
\end{align}
Now, assume that $a \ge 2$.
Since $[0; \overline{a, b}] < \frac{1}{2}$, it follows from \eqref{L(x,n)} that
\[
L(x, n) < a + 1,
\]
which yields $L(x, n) < L(x, m)$.
Next, assume that $a = 1$.
Since
\[
[0; \overline{1, 2}] = \sqrt{3} - 1 \quad \text{and} \quad [0; \overline{2, 1}] = \frac{\sqrt{3} - 1}{2},
\]
we have
\begin{align*}
L(x, n) &\le 2(\sqrt{3} - 1) + 1 = 2\sqrt{3} - 1, \\
L(x, m) &\ge 2 \cdot \frac{\sqrt{3} - 1}{2} + 2 = \sqrt{3} + 1.
\end{align*}
Since $2\sqrt{3} - 1 < \sqrt{3} + 1$, this  implies that
\[
L(x, n) < L(x, m).
\]

\end{proof}

\begin{lem}\label{l5}
For $n\in \mathbb{Z}$,
\[
L(x, 1) \geq L(x, 2n+1).
\]
\end{lem}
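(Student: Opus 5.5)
The plan is to compare $L(x,1)$ with $L(x,2n+1)$ term by term, using the block structure of $\phi_{a,b}(G(x))$ and the lexicographic facts in Lemma~\ref{l2}(4),(5). First dispose of the trivial cases. If $x=0$ or $x=1$, the sequence $\{a_i\}$ is constant, so all $L(x,\cdot)$ coincide. For $0<x<1$, recall that the block $a_{2k-1}a_{2k}$ encodes $G(x,k)$. Then $L(x,2n+1)$ has central digit $a_{2n}$, which encodes $G(x,n)$, while $L(x,1)$ has central digit $a_0$, which encodes $G(x,0)=1$ by Lemma~\ref{l2}(2). So $a_0=b$. If $G(x,n)=0$, then $a_{2n}=a<b=a_0$, and Lemma~\ref{l4} gives $L(x,2n+1)<L(x,1)$ directly. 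It remains to treat $G(x,n)=1$, where the central digits agree and we must compare the two tails.

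\emph{Forward tail.} For $L(x,1)$ this is $[0;a_1,a_2,\dots]$, the doubling of $G(x,1)G(x,2)\cdots$. For $L(x,2n+1)$ it is $[0;a_{2n+1},\dots]$, the doubling of $G(x,n+1)G(x,n+2)\cdots$. If $n\equiv 0 \pmod{\mathrm{den}(x)}$ these coincide. Otherwise, Lemma~\ref{l2}(4) with $m=n+1$ gives a first index $j$ with $G(x,1+j)<G(x,n+1+j)$. In the doubled words the first differing digit therefore sits at the odd position $2j+1$, where $L(x,1)$ has $a$ and the other has $b$. Since a smaller partial quotient at an odd position makes a continued fraction larger, we get $[0;a_1,\dots]>[0;a_{2n+1},\dots]$.

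\emph{Backward tail.} For $L(x,1)$ this is $[0;a_{-1},a_{-2},\dots]$. Here $a_{-1}$ is the second letter of block $0$ (encoding $G(x,0)$), followed by the two letters of block $-1$, and so on. So its digits are $G(x,0)$ once, then $G(x,-1)$ twice, $G(x,-2)$ twice, and so on, each read as $a$ or $b$. Likewise the backward tail of $L(x,2n+1)$ reads $G(x,n)$ once, then $G(x,n-1)$ twice, and so on. If $n\not\equiv 0$, Lemma~\ref{l2}(5) with $m=n$ gives a first $j$ with $G(x,-j)>G(x,n-j)$. Since $G(x,0)=G(x,n)=1$, we have $j\ge 1$. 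The first differing digit then occurs at the even position $2j$, where $L(x,1)$ carries the larger digit $b$. A larger partial quotient at an even position makes a continued fraction larger, so this tail is again strictly larger for $L(x,1)$.

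Summing the three components gives $L(x,1)\ge L(x,2n+1)$, with equality exactly when $n\equiv 0\pmod{\mathrm{den}(x)}$. The only delicate step is the parity bookkeeping. The backward tail starts with a single copy of the central block's letter, which shifts doubled positions from odd to even; this shift is precisely what makes Lemma~\ref{l2}(5), with its reversed inequality, yield the same conclusion as Lemma~\ref{l2}(4). I would verify this indexing carefully, together with the standard monotonicity rule: increasing the $k$-th partial quotient increases the value for even $k$ and decreases it for odd $k$.
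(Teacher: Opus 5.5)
Your proposal is correct and follows essentially the same route as the paper. Lemma~\ref{l4} handles the case $G(x,n)=0$, and when $G(x,n)=1$ you compare the forward tails via Lemma~\ref{l2}(4) (first difference at an odd position) and the backward tails via Lemma~\ref{l2}(5) (first difference at an even position, using $j\ge 1$), with equality exactly when $n\equiv 0 \pmod{\mathrm{den}(x)}$; the only difference is that you split on $G(x,n)$ before comparing the forward tails, whereas the paper compares the forward tails first.
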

\begin{proof}
If $x = 0$ or $x = 1$, the claim of the lemma holds trivially.
We assume that $0<x<1$.
By the periodicity of $\{a_i\}_{i \in \mathbb{Z}}$, we may assume without loss of generality that $n > 0$.
By (3) of Lemma \ref{l2}, if $n \equiv 0 \pmod{\mathrm{den}(x)}$, then we have
\[
L(x, 1) = L(x, 2n+1).
\]
Now, suppose that $n \not\equiv 0 \pmod{\mathrm{den}(x)}$.
By (4) of Lemma \ref{l2}, there exists
$j\in\mathbb{Z}_{\geq 0}$ such that
\[
G(x,1+j)<G(x,n+1+j),
\]
while
\[
G(x,1+k)=G(x,n+1+k)
\]
for all $0\leq k<j$.
Therefore, for $j > 0$, we have
\[
a_1 a_2 \ldots a_{2j} = a_{2n+1} a_{2n+2} \ldots a_{2n+2j} \quad \text{and} \quad a_{2j+1} = a, \quad a_{2n+2j+1} = b.
\]
When $j = 0$, we simply have $a_1 = a$ and $a_{2n+1} = b$.

Since $a_{2j+1} < a_{2n+2j+1}$, it follows that
\begin{align}\label{a_1a_2}
[0; a_1, a_2, \ldots, a_{2j}, a_{2j+1}, \ldots] > [0; a_{2n+1}, a_{2n+2}, \ldots, a_{2n+2j}, a_{2n+2j+1}, \ldots].
\end{align}
If $G(x, n) = 0$, then we have
\[
a_0 = b, \quad a_{2n} = a,
\]
and by Lemma \ref{l4}, it follows that
\[
L(x, 1) > L(x, 2n+1).
\]
We assume that $G(x, n) = 1$.
By (5) of Lemma \ref{l2}, then there exists
$j\in\mathbb{Z}_{> 0}$ such that
\[
G(x,-j)>G(x,n-j),
\]
while
\[
G(x,-k)=G(x,n-k)
\]
for all $0\leq k<j$.
Therefore,  we have
\[
a_{-1} a_{-2} \ldots a_{-2j+1} = a_{2n-1} a_{2n-2} \ldots a_{2n-2j+1} \quad \text{and} \quad a_{-2j} = b, \quad a_{2n-2j} = a.
\]
Since $a_{-2j} > a_{2n-2j}$, it follows that
\begin{align}\label{a_-1a_-2}
[0;a_{-1}, a_{-2}, \ldots, a_{-2j+1}, a_{-2j}, \ldots] > [0; a_{2n-1}, a_{2n-2}, \ldots, a_{2n-2j+1}, a_{2n-2j}, \ldots].
\end{align}
Therefore, combining \eqref{a_1a_2} and \eqref{a_-1a_-2} with $a_0 = a_{2n} = b$, we obtain
\[
L(x, 1) > L(x, 2n+1).
\]
\end{proof}
\begin{lem}\label{l6}
For $n\in \mathbb{Z}$,
\[
L(x, 1) \geq L(x, 2n).
\]
\end{lem}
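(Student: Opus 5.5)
We will compare the two sums in Lemma~\ref{l3} term by term, following the pattern of the proof of Lemma~\ref{l5}. As there, the cases $x=0$ and $x=1$ are trivial, since the sequence $\{a_i\}$ is then constant, so we assume $0<x<1$. Recall that the block $a_{2k-1}a_{2k}$ equals $aa$ or $bb$ according to $G(x,k)$.

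By Lemma~\ref{l2}(2) we have $G(x,0)=1$, so $a_{-1}=a_0=b$. Hence
\[
L(x,1)=[0;b,a_{-2},a_{-3},\ldots]+b+[0;a_1,a_2,\ldots].
\]
Here the left tail is $b$ followed by the blocks of $G(x,-1),G(x,-2),\ldots$, and the right tail consists of the blocks of $G(x,1),G(x,2),\ldots$. The centre of $L(x,2n)$ is $a_{2n-1}$, the first letter of block $n$.

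\textbf{Case $G(x,n)=0$.} Then $a_{2n-1}=a<b=a_0$, and Lemma~\ref{l4} immediately gives $L(x,2n)<L(x,1)$.

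\textbf{Case $G(x,n)=1$.} Then $a_{2n-1}=a_{2n}=b$, so
\[
L(x,2n)=[0;a_{2n-2},a_{2n-3},\ldots]+b+[0;b,a_{2n+1},\ldots].
\]
Since $L$ is a sum of a left tail and a right tail, we may swap the two tails and compare them crosswise:
\begin{itemize}
\item we compare $[0;b,a_{-2},\ldots]$, which is $b$ followed by the blocks of $G(x,-k)$ for $k\ge1$, with $[0;b,a_{2n+1},\ldots]$, which is $b$ followed by the blocks of $G(x,n+k)$ for $k\ge1$;
\item we compare $[0;a_1,\ldots]$, the blocks of $G(x,k)$, with $[0;a_{2n-2},\ldots]$, the blocks of $G(x,n-k)$ for $k\ge1$.
\end{itemize}

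\textbf{Left tails.} Apply Lemma~\ref{l2}(6) with $m=n$. The index $j=0$ gives $G(x,0)=1=G(x,n)$, so it is not a difference. Hence either $G(x,-k)=G(x,n+k)$ for all $k$, or at the first $k>0$ where they differ we have $G(x,-k)>G(x,n+k)$. That first discrepancy sits at position $2k$ of the tail, an even position, and a larger partial quotient at an even position gives a larger continued fraction. Therefore
\[
[0;b,a_{-2},\ldots]\ge[0;b,a_{2n+1},\ldots].
\]

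\textbf{Right tails.} Apply Lemma~\ref{l2}(6)'s companion, Lemma~\ref{exactlyone}, with $m=n$. Either $G(x,k)=G(x,n-k)$ for all $k$, or at the first $j>0$ where they differ we have $G(x,j)<G(x,n-j)$. That discrepancy sits at position $2j-1$, an odd position, where a smaller partial quotient gives a larger value. Therefore
\[
[0;a_1,\ldots]\ge[0;a_{2n-2},\ldots].
\]

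Adding the two inequalities and the common centre $b$ gives $L(x,1)\ge L(x,2n)$, with equality exactly when both tail comparisons are identities.

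The main point requiring care is the parity bookkeeping. We need to verify that, after swapping the two tails of $L(x,2n)$, each first discrepancy falls at a position whose parity is favourable, namely even for the left tails because of the leading $b$, and odd for the right tails. We also need the sign conventions of Lemma~\ref{l2}(6) and Lemma~\ref{exactlyone} to match these parities exactly.
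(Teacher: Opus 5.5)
Your proof is correct and follows the paper's argument essentially verbatim. The case $G(x,n)=0$ is handled by Lemma~\ref{l4}, and for $G(x,n)=1$ you compare the tails crosswise using Lemma~\ref{l2}(6) and Lemma~\ref{exactlyone} with $m=n$, with the first discrepancies landing at even and odd positions respectively. The only cosmetic difference is that the paper keeps the common centre attached and compares $a_0+[0;a_{-1},\ldots]$ with $a_{2n-1}+[0;a_{2n},\ldots]$, whereas you cancel $a_0=a_{2n-1}=b$ first.
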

\begin{proof}
If $x = 0$ or $x = 1$, the claim of the lemma holds trivially.
We assume that $0<x<1$.
By the periodicity of $\{a_i\}_{i \in \mathbb{Z}}$, we may assume without loss of generality that $n > 0$.
If $G(x, n) = 0$, then we have
\[
a_0 = b, \quad a_{2n-1} = a,
\]
and by Lemma \ref{l4}, it follows that
\[
L(x, 1) > L(x, 2n).
\]
We assume that $G(x, n) = 1$.

First, suppose that the first case of Lemma~\ref{l2}(6) holds with $m = n$, that is,
\[
G(x, -j) = G(x, n+j) \quad \text{for all } j \in \mathbb{Z}_{\ge 0}.
\]
Then we have the equality of sequences
\[
a_0 a_{-1} a_{-2} \dots = a_{2n-1} a_{2n} a_{2n+1} \dots,
\]
which implies that
\begin{equation}\label{a_0+00}
a_0 + [0; a_{-1}, a_{-2}, \dots] = a_{2n-1} + [0; a_{2n}, a_{2n+1}, \dots].
\end{equation}

Next, suppose that the second case of Lemma~\ref{l2}(6) holds with $m = n$. Namely, there exists $j \in \mathbb{Z}_{\ge 0}$ such that
\[
G(x, -j) > G(x, n+j)
\]
and
\[
G(x, -k) = G(x, n+k) \quad \text{for all } 0 \le k < j.
\]
Note that $j \ge 1$ since $G(x, n) = 1$. Therefore, we have
\[
a_0 a_{-1} \dots a_{-2j+1} = a_{2n-1} a_{2n} \dots a_{2n+2j-2} \quad \text{and} \quad a_{-2j} = b, \quad a_{2n+2j-1} = a.
\]
This implies that
\begin{equation}\label{a_0+01}
a_0 + [0; a_{-1}, a_{-2}, \dots] > a_{2n-1} + [0; a_{2n}, a_{2n+1}, \dots].
\end{equation}

In either case, we obtain
\begin{equation}\label{a_0+0}
a_0 + [0; a_{-1}, a_{-2}, \dots] \ge a_{2n-1} + [0; a_{2n}, a_{2n+1}, \dots].
\end{equation}
Suppose that condition (1) of Lemma \ref{exactlyone} holds for $m = n$, that is,
\[
G(x, k) = G(x, n - k) \quad \text{for all } k \in \mathbb{Z}.
\]
Therefore, we have
\[
a_1 a_2 \dots = a_{2n-2} a_{2n-3} \dots,
\]
which implies that
\begin{align}\label{0a_1}
[0;a_1, a_2, \dots] = [0;a_{2n-2}, a_{2n-3}, \dots].
\end{align}
From \eqref{a_0+0} and \eqref{0a_1}, we obtain
\[
L(x, 1) \geq L(x, 2n).
\]
Suppose that condition (2) of Lemma \ref{exactlyone} holds for $m = n$, that is,
There exists $j' \in \mathbb{Z}_{> 0}$ such that
    \[
    G(x, j') < G(x, n - j')
    \]
    and
    \[
    G(x, k) = G(x, n - k) \quad \text{for all } 0 < k < j'.
    \]
Therefore, we have $a_{2j'-1} = a$ and $a_{2n-2j'} = b$.
Furthermore, when $j' \ge 2$, it holds that
\[
a_1 a_2 \dots a_{2j'-2} = a_{2n-2} a_{2n-3} \dots a_{2n-(2j'-1)}.
\]
This implies that
\begin{align}\label{0a_1>}
[0; a_1, a_2, \dots] > [0; a_{2n-2}, a_{2n-3}, \dots].
\end{align}
From \eqref{a_0+0} and \eqref{0a_1>}, we obtain
\[
L(x, 1) > L(x, 2n).
\]

\end{proof}

Combining Lemmas \ref{l5} and \ref{l6}, we immediately obtain the following theorem.

\begin{thm}\label{t3}
Let $a,b \in \mathbb{Z}_{>0}$ with $a<b$.
Let $x \in \mathbb{Q}$ such that $0 \le x \le 1$.
Then,
\[
\mathcal{L}([\phi_{a,b}(G(x))]) = L(x,1).
\]
\end{thm}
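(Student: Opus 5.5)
We will derive Theorem~\ref{t3} directly from Lemma~\ref{l3} together with Lemmas~\ref{l5} and \ref{l6}. First, the sequence $a_1a_2\ldots$ of partial quotients of $[\phi_{a,b}(G(x))]$ is purely periodic, because $G(x)$ is purely periodic by Lemma~\ref{l2}(1) (or constant when $x\in\{0,1\}$). It therefore extends to the doubly infinite sequence $\{a_i\}_{i\in\mathbb{Z}}$ used in the definition of $L(x,n)$. Lemma~\ref{l3} then gives
\[
\mathcal{L}([\phi_{a,b}(G(x))])=\max_{i\in\mathbb{Z}}\bigl([0;a_{i-1},a_{i-2},\ldots]+a_i+[0;a_{i+1},a_{i+2},\ldots]\bigr).
\]
Substituting $i=n-1$ identifies the $i$-th term with $L(x,n)$, so the Lagrange constant equals $\max_{n\in\mathbb{Z}} L(x,n)$. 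Since the sequence is periodic, only finitely many distinct values occur, and the maximum is attained.

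Next, we split the index set by parity. For odd $n=2m+1$, Lemma~\ref{l5} gives $L(x,1)\ge L(x,2m+1)$. For even $n=2m$, Lemma~\ref{l6} gives $L(x,1)\ge L(x,2m)$. Hence $L(x,1)$ is an upper bound for all $L(x,n)$. It is also one of the values, so it is the maximum, and $\mathcal{L}([\phi_{a,b}(G(x))])=L(x,1)$.

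The only point needing care is that Lemmas~\ref{l4}--\ref{l6} were stated for the same general $a<b$ as in the theorem. We should check this hypothesis. Lemma~\ref{l4} is the one place where the case $b=a+1$ requires a separate argument, and there it is handled for all $a\ge1$. When $b\ge a+2$, the first case of that proof applies. So the lemmas hold for every $a<b$. The degenerate cases $x=0,1$, where the sequence is constant, are covered trivially, since then all $L(x,n)$ coincide in each parity class and the lemmas' trivial cases apply.

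Nothing here is a real obstacle; the substantive work was done in Lemmas~\ref{l5} and \ref{l6}. The step deserving the most attention is the index bookkeeping between Lemma~\ref{l3} and the definition of $L(x,n)$: the shift $i=n-1$, together with the convention that $a_{2n-1}a_{2n}$ encodes $G(x,n)$.
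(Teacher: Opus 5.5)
Your proposal is correct and is essentially the paper's proof. The paper likewise obtains Theorem~\ref{t3} directly by combining the max-formula of Lemma~\ref{l3} (via the shift $i=n-1$) with Lemmas~\ref{l5} and \ref{l6}, which cover odd and even indices respectively. Your check that Lemma~\ref{l4} holds for both $b=a+1$ and $b\ge a+2$ is a reasonable sanity check that the paper leaves implicit.
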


\section{Proof of Main Theorem}\label{sec:main}

In this section, we prove the main theorem by dividing it into three cases, each presented as a lemma.
We define
\[
\overline{G}(x) = \dots G(x,-1) G(x,0) G(x,1) \dots.
\]

\begin{lem}\label{ml1}
Let $x, y \in \mathbb{Q}$ satisfy $0 \le x < y \le 1$.
If $x$ and $y$ are of type (1) and $a \in \mathbb{Z}_{>0}$ satisfies $a \ge 2$, then
\[
\mathcal{L}([\phi_{a,a+1}(G(x))]) < \mathcal{L}([\phi_{a,a+1}(G(y))]).
\]
\end{lem}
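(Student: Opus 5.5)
By Theorem~\ref{t3}, the claim reduces to showing $L(x,1) < L(y,1)$, where $L(\cdot,1)$ is computed with $b=a+1$. Write $L(x,1) = [0; a_{-1}, a_{-2}, \ldots] + a_0 + [0; a_1, a_2, \ldots]$. For $0<x<1$, Lemma~\ref{l2}(2) gives $G(x,0)=1$ and $G(x,1)=0$, so $a_0 = b$ and $a_1 = a$. The same holds for $y$.

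Since $a \ge 2$, every tail $[0;\ldots]$ lies in $(1/(a+2), 1/a)$, and this interval is bounded away from $1/2$. So the comparison is settled by the first place where the two-sided words around the central position differ. The plan is to show the following.

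\begin{itemize}
\item The forward tail $[0;a_1,a_2,\ldots]$ for $x$ exceeds that for $y$ by a small amount.
\item The backward tail $[0;a_{-1},a_{-2},\ldots]$ for $y$ exceeds that for $x$ by an amount much larger than that.
\end{itemize}

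\textbf{Step 1: locate the first difference.} Let $n$ be minimal with $\lfloor nx\rfloor < \lfloor ny\rfloor$. In type (1), $n < \min(\mathrm{den}(x),\mathrm{den}(y))$. Then $G(x,j)=G(y,j)$ for $j<n$, while $G(x,n)=0$ and $G(y,n)=1$. Going forward, the continued fraction digits agree up to index $2n-2$, and $a_{2n-1}$ equals $a$ for $x$ and $b$ for $y$.

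Going backward, use $G(x,-j+1)=G(x,j)$ for $j\not\equiv 0,1 \pmod{\mathrm{den}}$ (Lemma~\ref{l2}(7)). Since $n<\mathrm{den}$, the indices $2\le j\le n$ satisfy this, so the backward words are the mirror images of the forward ones. Hence $G(\cdot,-j+1)$ agree for $2\le j\le n-1$, and at $j=n$ we get $G(x,-n+1)=0 < 1=G(y,-n+1)$. The digits $a_0,a_{-1}$ are both $b$. So the backward digits agree through $a_{-2n+3}$, and first differ at index $-2n+2$: it is $a$ for $x$ and $b$ for $y$. Thus both sides first differ at the same depth $2n-1$ counted from the center.

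\textbf{Step 2: quantify the two differences.} Apply the standard parity rule for continued fractions. At an odd position (forward index $2n-1$), a larger digit makes the value smaller, so the $x$ forward tail is larger. At an even depth (backward index $-2n+2$, i.e.\ the $(2n-2)$-th digit of $[0;a_{-1},\ldots]$), a larger digit makes the value larger, so the $y$ backward tail is larger. To compare magnitudes, write each tail difference as $\frac{\pm(\theta-\theta')}{(q_k+q_{k-1}\theta)(q_k+q_{k-1}\theta')}$ with common prefix convergents. The backward common prefix has length $2n-3$, one shorter than the forward common prefix $2n-2$, and moreover it is the reverse of the forward prefix minus one symbol, so the denominators are comparable. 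The backward difference thus gains a factor of roughly $a^2$ (one extra partial quotient squared). The remaining tail-value differences $\theta-\theta'$ are at least of order $(b-a)/a^2$ on one side and at most of order $(b-a)/a^2\cdot(\text{bounded})$ on the other. For $a\ge 2$ this yields the strict inequality.

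\textbf{Cases and the main obstacle.} Handle separately the cases $(x,y)=(0,1)$ (direct: $L$ involves only $a$'s versus only $b$'s), $x=0$, and $y=1$, where $G(0,0)=0$ changes the centre digit. In these cases one can compare directly using Lemma~\ref{l4}-type bounds. The hard part is Step 2: making the comparison of $q$-denominators rigorous despite the backward and forward words being only mirror images up to the shift. I would bound $q_{2n-2}/q_{2n-3}$ below by about $a$ using $a_i\ge a$, and bound the tail differences using explicit extremes $[0;\overline{a,b}]$ and $[0;\overline{b,a}]$ as in Lemma~\ref{l4}. Checking that $a\ge 2$ suffices, and that $a=1$ fails, should come out of this estimate.
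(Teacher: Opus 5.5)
Your reduction via Theorem~\ref{t3} and the combinatorial set-up are correct and coincide with the paper's. With $n$ minimal, Lemma~\ref{l2}(7) gives the mirror identities. The forward expansions first differ at $a_{2n-1}$ (odd position, so the $x$-tail is larger), and the backward ones at $a_{-2n+2}$ (even position, so the $y$-tail is larger). There are some minor slips:
the backward mismatch is at depth $2n-2$, not ``the same depth $2n-1$'';
the cases $x=0$ and $y=1$ are vacuous in type (1), since $n<\min(\mathrm{den}(x),\mathrm{den}(y))$ forces $0<x<y<1$;
the interval $(1/(a+2),1/a)$ is not bounded away from $1/2$ at $a=2$, and in any case it does not settle the comparison.

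The genuine gap is Step~2, the whole quantitative content of the lemma and the only place where $a\ge 2$ is used. You do not carry it out, and your sketch rests on a wrong description of the prefixes. The backward common prefix is not the reverse of the forward one with a symbol removed. With $w=a_3\cdots a_{2n-2}$, the forward common prefix is $a\,a\,w$ and the backward one is $(a+1)\,w$. Already for $n=2$ these are $a\,a$ and $a+1$, with convergent denominators $a^2+1,a$ versus $a+1,1$. So bounding $q_{2n-2}/q_{2n-3}$ inside one expansion compares the wrong quantities. What must be compared are the convergent denominators of $[0;a,a,w]$ and $[0;a+1,w]$, and of the same words with the last letter of $w$ deleted. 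In terms of the matrix $\begin{pmatrix} m_1&m_2\\ m_3&m_4\end{pmatrix}$ of $w$, these ratios are $\frac{am_2+(a^2+1)m_4}{m_2+(a+1)m_4}$ and $\frac{am_1+(a^2+1)m_3}{m_1+(a+1)m_3}$. Both lie between $\frac{a^2+1}{a+1}$ and $a$. So the guaranteed gain is only about $\bigl(\frac{a^2+1}{a+1}\bigr)^2$, which is $25/9$ at $a=2$, not $a^2=4$. You also lose a factor $\min(1,\theta/\theta')$, because the tails entering the forward and backward denominators are different.

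What is missing is an explicit estimate showing that the ratio of tail differences
$\bigl([0;a,a,x_1]-[0;a+1,a+1,y_1]\bigr)/\bigl([0;a,a,x_2]-[0;a+1,a+1,y_2]\bigr)$
stays below this reduced gain, with $x_i,y_i$ the complete quotients as in the paper. Saying both differences are ``of order $(b-a)/a^2$ up to a bounded constant'' is not enough, because at $a=2$ the available margin is only a fixed constant factor.

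The estimate can be closed. For $a=2$, digits in $\{2,3\}$ give $[0;2,2,\ldots]\in(2/5,3/7)$ and $[0;3,3,\ldots]\in(3/10,4/13)$. Hence the tail ratio is below $\frac{9/70}{6/65}<1.4$, while the denominator gain exceeds $\frac{25}{9}\cdot\frac{14}{15}\cdot\frac{39}{40}>2.5$. For $a\ge 3$ the same crude bounds leave a larger margin.

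The paper does the equivalent computation with the full length-$2n$ blocks. In its notation, it proves $R<(1+2/a)^2R'$ from $x_1<(1+2/a)x_2$ and $y_1<(1+2/a)y_2$, and $Q_i>(1+2/a)Q_i'$ by comparing coefficients, with $14/9$ in place of $1+2/a$ when $a=2$. As written, your proposal establishes only the signs of the two differences, not the inequality between their sizes, which is the actual claim.
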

\begin{proof}
Let
\begin{align}\label{aa+1}
\phi_{a,a+1}(\overline{G}(x)) = \dots a_{-1} a_0 a_1 a_2 \dots
\end{align}
and
\begin{align}\label{ba+1}
\phi_{a,a+1}(\overline{G}(y)) = \dots b_{-1} b_0 b_1 b_2 \dots.
\end{align}
Here, we set
\[
\phi_{a,a+1}(G(x,k)) = a_{2k-1} a_{2k} \quad \text{and} \quad \phi_{a,a+1}(G(y,k)) = b_{2k-1} b_{2k}.
\]
If $(x,y) = (0,1)$, the lemma holds trivially.
Thus, we assume that $(x,y) \ne (0,1)$.
Then, there exists $n < \min(\mathrm{den}(x), \mathrm{den}(y))$ satisfying $\lfloor nx \rfloor < \lfloor ny \rfloor$.
In particular, let $n$ be the smallest such integer.
We note that $n\geq 2$ and $0<x,y<1$.
Then, for all $0 \le k < n$, we have
\[
\lfloor kx \rfloor = \lfloor ky \rfloor.
\]
Then, one can easily verify that $\lfloor -kx \rfloor = \lfloor -ky \rfloor$ for all $0 \le k < n$, as well as $\lfloor -nx \rfloor > \lfloor -ny \rfloor$.
From the above considerations and (7) of Lemma \ref{l2}, we obtain the following properties:
\begin{enumerate}
    \item $G(x,0) = 1$, $G(x,1) = 0$, $G(y,0) = 1$, and $G(y,1) = 0$;
    \item $G(x,n) = 0$ and $G(y,n) = 1$;
    \item $G(x,2) \dots G(x,n) = G(x,-1) \dots G(x,-(n-1))$;
    \item $G(y,2) \dots G(y,n) = G(y,-1) \dots G(y,-(n-1))$;
    \item $G(x,2) \dots G(x,n-1) = G(y,2) \dots G(y,n-1)$.
\end{enumerate}
Thus, we have
\[
\begin{aligned}
a_1 a_2 a_3 \dots a_{2n} &= a a a_3 \dots a_{2n-2} a a, \\
b_1 b_2 b_3 \dots b_{2n} &= a a a_3 \dots a_{2n-2} (a+1)(a+1),
\end{aligned}
\]
and
\[
\begin{aligned}
a_{-1} a_{-2} \dots a_{-2n+1} &= (a+1) a_3 \dots a_{2n-2} a a, \\
b_{-1} b_{-2} \dots b_{-2n+1} &= (a+1) a_3 \dots a_{2n-2} (a+1)(a+1).
\end{aligned}
\]
Here and in what follows, a word such as $a_3 \dots a_{2n-2}$ is understood to be the empty word when $n = 2$.

Therefore, we obtain
\begin{align}\label{08251}
\begin{aligned}
[0; a_1, a_2, a_3, \dots, a_{2n}, x_1]
&= [0; a, a, a_3, \dots, a_{2n-2}, a, a, x_1], \\[1ex]
[0; b_1, b_2, b_3, \dots, b_{2n}, y_1]
&= [0; a, a, a_3, \dots, a_{2n-2}, a+1, a+1, y_1], \\[1ex]
[0; a_{-1}, a_{-2}, \dots, a_{-2n+1}, x_2]
&= [0; a+1, a_3, \dots, a_{2n-2}, a, a, x_2], \\[1ex]
[0; b_{-1}, b_{-2}, \dots, b_{-2n+1}, y_2]
&= [0; a+1, a_3, \dots, a_{2n-2}, a+1, a+1, y_2],
\end{aligned}
\end{align}
where
\[
\begin{aligned}
x_1 &= [a_{2n+1}; a_{2n+2}, \dots], & y_1 &= [b_{2n+1}; b_{2n+2}, \dots], \\
x_2 &= [a_{-2n}; a_{-2n-1}, \dots], & y_2 &= [b_{-2n}; b_{-2n-1}, \dots].
\end{aligned}
\]
Let
\begin{align}\label{m_1m_219}
\begin{pmatrix} m_1 & m_2 \\ m_3 & m_4 \end{pmatrix}
= \begin{pmatrix} 0 & 1 \\ 1 & a_3 \end{pmatrix} \dots \begin{pmatrix} 0 & 1 \\ 1 & a_{2n-2} \end{pmatrix},
\end{align}
where this matrix product is understood to be the identity matrix when $n = 2$.

Therefore, by \eqref{08251}, we have
\[
\begin{aligned}
&\begin{pmatrix} 0 & 1 \\ 1 & a_1 \end{pmatrix} \dots \begin{pmatrix} 0 & 1 \\ 1 & a_{2n} \end{pmatrix} \begin{pmatrix} 1 \\ x_1 \end{pmatrix} \\
&\quad = \begin{pmatrix} 0 & 1 \\ 1 & a \end{pmatrix}^2 \begin{pmatrix} m_1 & m_2 \\ m_3 & m_4 \end{pmatrix} \begin{pmatrix} 0 & 1 \\ 1 & a \end{pmatrix}^2 \begin{pmatrix} 1 \\ x_1 \end{pmatrix} \\
&\quad = \begin{pmatrix}
P_1\\
Q_1
\end{pmatrix}.
\end{aligned}
\]
where
\[
\begin{aligned}\label{a^2m_4}
P_1 &= a^2 m_4 + a(m_2 + m_3) + m_1 + \bigl( a^3 m_4 + a^2(m_2 + m_3) + a(m_1 + m_4) + m_2 \bigr) x_1, \\[1ex]
Q_1 &= a^3 m_4 + a^2(m_2 + m_3) + a(m_1 + m_4) + m_3 \\
  &\quad + \bigl( (a^4 + 2a^2 + 1)m_4 + (a^3 + a)(m_2 + m_3) + a^2 m_1 \bigr) x_1.
\end{aligned}
\]
Therefore, we obtain
\[
[0; a_1, a_2, a_3, \dots, a_{2n}, x_1] = \frac{P_1}{Q_1}.
\]

By \eqref{08251}, we have
\[
\begin{aligned}
\begin{pmatrix} 0 & 1 \\ 1 & b_1 \end{pmatrix} \dots \begin{pmatrix} 0 & 1 \\ 1 & b_{2n} \end{pmatrix} \begin{pmatrix} 1 \\ y_1 \end{pmatrix}
&= \begin{pmatrix} 0 & 1 \\ 1 & a \end{pmatrix}^2 \begin{pmatrix} m_1 & m_2 \\ m_3 & m_4 \end{pmatrix} \begin{pmatrix} 0 & 1 \\ 1 & a+1 \end{pmatrix}^2 \begin{pmatrix} 1 \\ y_1 \end{pmatrix} \\[1ex]
&= \begin{pmatrix} P_2 \\ Q_2 \end{pmatrix},
\end{aligned}
\]
where
\[
\begin{aligned}
P_2 &= a^2 m_4 + a(m_2+m_3+m_4) + m_1+m_2 \\
   &\quad + \bigl( (a^3+2a^2)m_4 + a^2(m_2+m_3) + a(m_1+2m_2+m_3+2m_4) + m_1+2m_2 \bigr) y_1, \\[1.5ex]
Q_2 &= a^3 m_4 + a^2(m_2+m_3+m_4) + a(m_1+m_2+m_4) + m_3+m_4 \\
   &\quad + \bigl( (a^4+2a^3+3a^2+2a)m_4 + (a^3+2a^2+a)m_2 + (a^3+a^2+a)m_3 + (a^2+a)m_1 + m_3+2m_4 \bigr) y_1.
\end{aligned}
\]

Therefore, we obtain
\[
[0; b_1, b_2, b_3, \dots, b_{2n}, y_1] = \frac{P_2}{Q_2}.
\]

By \eqref{08251}, we have
\[
\begin{aligned}
&\begin{pmatrix} 0 & 1 \\ 1 & a_{-1} \end{pmatrix} \dots \begin{pmatrix} 0 & 1 \\ 1 & a_{-2n+1} \end{pmatrix} \begin{pmatrix} 1 \\ x_2 \end{pmatrix} \\
&\quad = \begin{pmatrix} 0 & 1 \\ 1 & a+1 \end{pmatrix} \begin{pmatrix} m_1 & m_2 \\ m_3 & m_4 \end{pmatrix} \begin{pmatrix} 0 & 1 \\ 1 & a \end{pmatrix}^2 \begin{pmatrix} 1 \\ x_2 \end{pmatrix} \\
&\quad = \begin{pmatrix}
P_1'\\
Q_1'
\end{pmatrix}.
\end{aligned}
\]
where
\[
\begin{aligned}\label{a^2m_4}
P_1' &= a m_4 + m_3 + \bigl( (a^2 + 1) m_4 + a m_3 \bigr) x_2, \\[1ex]
Q_1' &= a^2 m_4 + a(m_2 + m_3 + m_4) + m_1 + m_3 \\
    &\quad + \bigl( a^3 m_4 + a^2(m_2 + m_3 + m_4) + a(m_1 + m_3 + m_4) + m_2 + m_4 \bigr) x_2.
\end{aligned}
\]

Therefore, we obtain
\[
[0; a_{-1}, a_{-2}, \dots, a_{-2n+1}, x_2] = \frac{P_1'}{Q_1'}.
\]

By \eqref{08251}, we have
\[
\begin{aligned}
\begin{pmatrix} 0 & 1 \\ 1 & b_{-1} \end{pmatrix} \dots \begin{pmatrix} 0 & 1 \\ 1 & b_{-2n+1} \end{pmatrix} \begin{pmatrix} 1 \\ y_2 \end{pmatrix}
&= \begin{pmatrix} 0 & 1 \\ 1 & a+1 \end{pmatrix} \begin{pmatrix} m_1 & m_2 \\ m_3 & m_4 \end{pmatrix} \begin{pmatrix} 0 & 1 \\ 1 & a+1 \end{pmatrix}^2 \begin{pmatrix} 1 \\ y_2 \end{pmatrix} \\[1ex]
&= \begin{pmatrix} P_2' \\ Q_2' \end{pmatrix},
\end{aligned}
\]
where
\[
\begin{aligned}
P_2' &= a m_4 + m_3 + m_4 
      + \bigl( a^2 m_4 + a(m_3 + 2m_4) + m_3 + 2m_4 \bigr) y_2, \\[1.5ex]
Q_2' &= a^2 m_4 + a(m_2 + m_3 + 2m_4) + m_1 + m_2 + m_3 + m_4 \\
     &\quad + \Bigl( a^3 m_4 + a^2(m_2 + m_3 + 3m_4) + a(m_1 + 2m_2 + 2m_3 + 4m_4) \\
     &\qquad\quad + m_1 + 2m_2 + m_3 + 2m_4 \Bigr) y_2.
\end{aligned}
\]
Therefore, we obtain
\[
[0; b_{-1}, b_{-2}, \dots, b_{-2n+1}, y_2] = \frac{P_2'}{Q_2'}.
\]

Therefore, considering $m_1 m_4-m_2 m_3=1$, we obtain
\begin{align}\label{P_2Q_219}
\frac{P_2}{Q_2} - \frac{P_1}{Q_1}
= -\frac{(a^2 + a - 1) x_1 y_1 + (a-1)x_1 + (a+2)y_1 + 1}{Q_1 Q_2}.
\end{align}
Let $R$ denote the numerator $(a^2 + a - 1) x_1 y_1 + (a-1)x_1 + (a+2)y_1 + 1$.

Similarly, we have
\begin{align}\label{P'_2Q'_219}
\frac{P_2'}{Q_2'} - \frac{P_1'}{Q_1'}
= \frac{(a^2 + a - 1) x_2 y_2 + (a-1)x_2 + (a+2)y_2 + 1}{Q_1' Q_2'}.
\end{align}
Let $R'$ denote the numerator $(a^2 + a - 1) x_2 y_2 + (a-1)x_2 + (a+2)y_2 + 1$.

It is clear that
\[
\frac{P_2}{Q_2} - \frac{P_1}{Q_1} < 0 \quad \text{and} \quad \frac{P_2'}{Q_2'} - \frac{P_1'}{Q_1'} > 0.
\]
By Theorem~\ref{t3}, the Lagrange constants are given by
\[
\begin{aligned}
\mathcal{L}([\phi_{a,b}(G(x))]) &= a_0 + [0; a_1, a_2, a_3, \dots, a_{2n}, x_1] + [0; a_{-1}, a_{-2}, \dots, a_{-2n+1}, x_2], \\
\mathcal{L}([\phi_{a,b}(G(y))]) &= b_0 + [0; b_1, b_2, b_3, \dots, b_{2n}, y_1] + [0; b_{-1}, b_{-2}, \dots, b_{-2n+1}, y_2].
\end{aligned}
\]
Since $a_0 = b_0$, to establish the inequality
\[
\mathcal{L}([\phi_{a,b}(G(x))]) < \mathcal{L}([\phi_{a,b}(G(y))]),
\]
it suffices to show that
\begin{align}\label{fracP_2}
\frac{P_2'}{Q_2'} - \frac{P_1'}{Q_1'} > -\left( \frac{P_2}{Q_2} - \frac{P_1}{Q_1} \right),
\end{align}
that is,
\begin{equation}\label{0919-1}
\frac{R'}{Q_1'Q_2'} > \frac{R}{Q_1Q_2}.
\end{equation}
To evaluate this inequality, we first compare $x_1$ and $x_2$.

We apply Lemma~\ref{l2}(8).
Hence, there exists an integer $j > n$ such that
\[
G(x, j) > G(x, -j+1)
\]
and
\[
G(x, k) = G(x, -k+1) \quad \text{for all } k \text{ satisfying } n < k < j.
\]
We have
\[
a_{2n+1} a_{2n+2} a_{2n+3} \dots = \phi_{a,a+1}(G(x, n+1) G(x, n+2) \dots)
\]
and
\[
a_{-2n} a_{-2n-1} a_{-2n-2} \dots = \phi_{a,a+1}(G(x, -n) G(x, -n-1) \dots).
\]
Since
\[
x_1 = [a_{2n+1}; a_{2n+2}, a_{2n+3}, \dots]
\quad \text{and} \quad
x_2 = [a_{-2n}; a_{-2n-1}, a_{-2n-2}, \dots],
\]
we obtain
\begin{equation}\label{0919-2}
x_1 > x_2.
\end{equation}
Similarly, we have
\begin{equation}\label{0919-3}
y_1 > y_2.
\end{equation}

Since the partial quotients of the continued fraction expansions of $x_1$ and $x_2$ consist of $a$ and $a+1$, we have
\[
x_2 > [a; a+2] = a + \frac{1}{a+2}
\]
and
\[
x_1 < [a+1; a] = a + 1 + \frac{1}{a}.
\]
Therefore, we obtain
\begin{align}\label{a+1}
\frac{x_1}{x_2} < \frac{a + 1 + 1/a}{a + 1/(a+2)}.
\end{align}
For $a \ge 1$, it is straightforward to verify that
\begin{align}
\frac{a + 1 + 1/a}{a + 1/(a+2)} < 1 + \frac{2}{a},
\end{align}
which implies
\begin{align}\label{x_2<}
x_1 < \left( 1 + \frac{2}{a} \right) x_2.
\end{align}
Similarly, we have
\begin{align}\label{y_2<}
y_1 < \left( 1 + \frac{2}{a} \right) y_2.
\end{align}
From this point onward, we assume that $a \ge 3$. 
The case $a = 2$ will be treated separately. 
We now compare the numerators and denominators of \eqref{P_2Q_219} and \eqref{P'_2Q'_219}, respectively.

By \eqref{x_2<} and \eqref{y_2<}, we obtain
\begin{align}\label{x_1y_1+}
\begin{aligned}
&\left( 1 + \frac{2}{a} \right)^2R'\\
&=\left( 1 + \frac{2}{a} \right)^2 \bigl( (a^2 + a - 1) x_2 y_2 + (a - 1) x_2 + (a + 2) y_2 + 1 \bigr) \\
&= (a^2 + a - 1) \left( \left( 1 + \frac{2}{a} \right) x_2 \right) \left( \left( 1 + \frac{2}{a} \right) y_2 \right) \\
&\quad + (a - 1) \left( 1 + \frac{2}{a} \right)^2 x_2 + (a + 2) \left( 1 + \frac{2}{a} \right)^2 y_2 + \left( 1 + \frac{2}{a} \right)^2 \\
&> (a^2 + a - 1) x_1 y_1 + (a - 1) x_1 + (a + 2) y_1 + 1\\
&=R.\\
\end{aligned}
\end{align}

Note that, by \eqref{m_1m_219}, we have
\[
m_1 \ge 1, \quad m_2 \ge 0, \quad m_3 \ge 0, \quad \text{and} \quad m_4 \ge 1.
\]
We rewrite $Q_1$ and $Q_1'$ as follows:
\[
\begin{aligned}
Q_1 &= (a^4 + 2a^2 + 1) m_4 x_1 + (a^3 + a) m_3 x_1 + (a^3 + a) m_2 x_1 + a^2 m_1 x_1 \\
    &\quad + (a^3 + a) m_4 + (a^2 + 1) m_3 + a^2 m_2 + a m_1, \\[1.5ex]
Q_1' &= (a^3 + a^2 + a + 1) m_4 x_2 + (a^2 + a) m_3 x_2 + (a^2 + 1) m_2 x_2 + a m_1 x_2 \\
     &\quad + (a^2 + a) m_4 + (a + 1) m_3 + a m_2 + m_1.
\end{aligned}
\]
For $a \ge 3$, it is straightforward to verify that the following inequalities hold:
\[
\begin{aligned}
\left( a^3 + a^2 + a + 1 \right) \left( 1 + \frac{2}{a} \right) &< a^4 + 2a^2 + 1, \\
(a^2 + a) \left( 1 + \frac{2}{a} \right) &< a^3 + a, \\
(a^2 + 1) \left( 1 + \frac{2}{a} \right) &< a^3 + a, \\
a \left( 1 + \frac{2}{a} \right) &< a^2, \\
(a + 1) \left( 1 + \frac{2}{a} \right) &< a^2 + 1.\\
\left( 1 + \frac{2}{a} \right) &<a
\end{aligned}
\]
Consequently, we obtain
\begin{align}\label{left(1}
\left( 1 + \frac{2}{a} \right) Q_1' < Q_1.
\end{align}
Similarly, we rewrite $Q_2$ and $Q_2'$ as follows:
\[
\begin{aligned}
Q_2 &= (a^4 + 2a^3 + 3a^2 + 2a+2) m_4 y_1 + (a^3 + 2a^2 + 2a) m_2 y_1 + (a^3 + a^2 + a+1) m_3 y_1 \\
    &\quad + (a^2 + a) m_1 y_1 + (a^3 + a^2 + a + 1) m_4 + (a^2 + a) m_2 + (a^2 + 1) m_3 + a m_1, \\[1.5ex]
Q_2' &= (a^3 + 3a^2 + 4a + 2) m_4 y_2 + (a^2 + 2a + 2) m_2 y_2 + (a^2 + 2a + 1) m_3 y_2 \\
     &\quad + (a + 1) m_1 y_2 + (a^2 + 2a + 1) m_4 + (a + 1) m_3 + (a + 1) m_2 + m_1.
\end{aligned}
\]
Similarly, we obtain
\begin{equation}\label{left(1+}
\left( 1 + \frac{2}{a} \right) Q_2' < Q_2.
\end{equation}

Using the inequalities \eqref{x_1y_1+}, \eqref{left(1}, and \eqref{left(1+}, we can establish \eqref{fracP_2}.
That is, the lemma holds for all $a \ge 3$.

Now consider the case $a = 2$.
In this case, $R$ and $R'$ are given by
\[
\begin{aligned}
R &= 5x_1 y_1 + x_1 + 4y_1 + 1, \\
R' &= 5x_2 y_2 + x_2 + 4y_2 + 1,
\end{aligned}
\]
and the denominators $Q_1, Q_1', Q_2, Q_2'$ reduce to
\[
\begin{aligned}
Q_1 &= 4m_1 x_1 + 10m_2 x_1 + 10m_3 x_1 + 25m_4 x_1 + 2m_1 + 4m_2 + 5m_3 + 10m_4, \\
Q_1' &= 2m_1 x_2 + 5m_2 x_2 + 6m_3 x_2 + 15m_4 x_2 + m_1 + 2m_2 + 3m_3 + 6m_4, \\[1ex]
Q_2 &= 6m_1 y_1 + 20m_2 y_1 + 15m_3 y_1 + 50m_4 y_1 + 2m_1 + 6m_2 + 5m_3 + 15m_4, \\
Q_2' &= 3m_1 y_2 + 10m_2 y_2 + 9m_3 y_2 + 30m_4 y_2 + m_1 + 3m_2 + 3m_3 + 9m_4.
\end{aligned}
\]
Furthermore, by \eqref{a+1}, we have
\begin{equation}
\frac{x_1}{x_2} < \frac{14}{9}.
\end{equation}
Similarly, we have
\begin{equation}
\frac{y_1}{y_2} < \frac{14}{9}.
\end{equation}
Therefore, we obtain
\[
\left( \frac{14}{9} \right)^2 R' > R.
\]
By comparing the corresponding coefficients of each term, we obtain
\[
\frac{14}{9} Q_1' < Q_1 \quad \text{and} \quad \frac{14}{9} Q_2' < Q_2.
\]
From the inequalities above, we conclude that the lemma holds for $a = 2$ as well.

\end{proof}

\begin{lem}\label{ml2}
Let $x, y \in \mathbb{Q}$ satisfy $0 \le x < y \le 1$.
If $x$ and $y$ are of type (2) and $a \in \mathbb{Z}_{>0}$ satisfies $a \ge 1$, then
\[
\mathcal{L}([\phi_{a,a+1}(G(x))]) < \mathcal{L}([\phi_{a,a+1}(G(y))]).
\]
\end{lem}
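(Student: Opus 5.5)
The plan is to compare the two Lagrange constants through Theorem~\ref{t3}: $\mathcal{L}([\phi_{a,a+1}(G(x))]) = L(x,1)$, and similarly for $y$. Keep the notation $a_i$ for $\phi_{a,a+1}(\overline{G}(x))$ and $b_i$ for $\phi_{a,a+1}(\overline{G}(y))$ from the proof of Lemma~\ref{ml1}, and put $q=\mathrm{den}(x)$.

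\textbf{Degenerate case.} First dispose of $x=0$. Here every $a_i=a$, while $G(y)$ contains a $1$, so some $b_{m-1}=a+1$. Lemma~\ref{l4}, applied inside a single word, gives $L(x,1)<L(y,m)\le L(y,1)$. The last step uses the maximum formula of Lemma~\ref{l3} together with Theorem~\ref{t3}. So we may assume $0<x<y<1$ and $q\ge 2$.

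\textbf{Step 1: the combinatorial picture near index $0$.}
\begin{itemize}
\item Forward side: the type (2) hypothesis gives $G(y,j)=G(x,j)$ for $1\le j\le q$.
\item Backward side, $j<q$: neither $jx$ nor $jy$ is an integer, since $\mathrm{den}(y)>q$. Hence $\lfloor -jx\rfloor=\lfloor -jy\rfloor$ for $1\le j<q$.
\item Backward side, $j=q$: $qx$ is an integer and $qx<qy<qx+1$, so $\lfloor -qy\rfloor=\lfloor -qx\rfloor-1$.
\end{itemize}
Consequently $G(y,-k)=G(x,-k)$ for $0\le k\le q-2$, while $G(x,-(q-1))=0$ and $G(y,-(q-1))=1$.

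\textbf{Step 2: translate to continued fractions.} Since $a_0=b_0=a+1$, it remains to compare the two tails. In the backward continued fraction $[0;a_{-1},a_{-2},\dots]$, the first discrepancy sits at position $2q-2$, an even position; it is the letter $a_{-2(q-1)}$. There $b$ carries $a+1$ and $a$ carries $a$, so the backward part for $y$ exceeds that for $x$ by a definite amount $\Delta_-$.

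In the forward continued fraction $[0;a_1,a_2,\dots]$, the two words agree through position $2q$. The first possible discrepancy therefore lies at depth at least $2q+1$, and we only need $|\Delta_+|<\Delta_-$, where $\Delta_+$ is the change in the forward part.

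\textbf{Step 3: quantitative comparison (main obstacle).} The key tool is the symmetry Lemma~\ref{l2}(7): $G(x,j)=G(x,-j+1)$ for $j\not\equiv 0,1 \pmod q$. It shows that the common forward prefix $a_1\cdots a_{2q}$ and the common backward prefix $a_{-1}\cdots a_{-2q+3}$ are essentially mirror images.

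Write both differences with the matrix product $M$ of the common middle block, as in \eqref{m_1m_219}. Then
\[
\Delta_- = \frac{R'}{Q_1'Q_2'}, \qquad |\Delta_+| = \frac{R}{Q_1Q_2}.
\]
The forward denominators $Q_1,Q_2$ contain at least two extra factors $\begin{pmatrix}0&1\\1&a\end{pmatrix}$ relative to $Q_1',Q_2'$. Since all entries of $M$ are nonnegative with $m_1,m_4\ge1$, comparing coefficients termwise should give $Q_iQ_i'^{-1}$ large enough to beat the bounded ratio $R/R'$. For this one uses bounds such as $x_i\in(a+\frac1{a+2},\,a+1+\frac1a)$ exactly as in Lemma~\ref{ml1}.

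The delicate point is that this must work already for $a=1$. So I would first try to avoid the estimate entirely, by showing $\Delta_+\ge 0$. The idea is to apply Lemma~\ref{l2}(4) to the shift by $q$ for $y$, noting that $G(x,q+1+j)=G(x,1+j)=G(y,1+j)$ while the agreement lasts. If the first difference has $y$'s letter larger at an odd position, then $\Delta_+>0$. Here the minimal index gap and the parity of the position must be tracked carefully.

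If this sign argument fails, I fall back to the two-depth gap, at least $2q+1$ versus $2q-2$. Each extra pair of partial quotients multiplies the denominator by at least $a^2+2\ge 3$, and this should beat the constant $\bigl(1+\tfrac2a\bigr)^2$ coming from the numerators.
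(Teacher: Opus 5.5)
Your degenerate case and Steps~1--2 are correct and agree with the paper's set-up. The paper records the same facts: the forward words agree through $a_{2q}$, $G(x,-q+1)=0<1=G(y,-q+1)$, and the mirror identity from Lemma~\ref{l2}(7) holds.

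\textbf{Step 3: the sign argument cannot work.} Step~3 is the whole content of the lemma, and your preferred way of avoiding the estimate is impossible. Take the first forward discrepancy, at pair index $m$, the least $m$ with $\lfloor mx\rfloor<\lfloor my\rfloor$, so $m>q$. There it is indeed $y$ whose letter is larger. But that letter sits at the odd position $2m-1$ of $[0;a_1,a_2,\dots]$. When two continued fractions $[0;c_1,c_2,\dots]$ first differ at an odd index, the one with the larger partial quotient there is the smaller number. Hence the forward part for $y$ is always strictly smaller. In the paper's notation, $x_1<y_1$ and $P_2/Q_2-P_1/Q_1=(x_1-y_1)/(Q_1Q_2)<0$. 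So $\Delta_+<0$ in every case. You used the parity rule correctly at the even position $2q-2$ in Step~2 but reversed it here. The lemma therefore genuinely requires proving $|\Delta_+|<\Delta_-$.

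\textbf{The fallback is unfinished.} Your fallback is the paper's method, but it stops at ``should'', and the bookkeeping you sketch is off. Take $M$ to be the product of the matrices for $a_3,\dots,a_{2q-2}$. Then the forward expression has $2q$ factors and the backward one $2q-1$: one extra factor, not two. The paper needs only two crude bounds on the denominators:
\begin{itemize}
\item $Q_1'<(1+\frac2a)Q_1$, from $x_2<(1+\frac2a)x_1$ and a termwise comparison;
\item $Q_2'<Q_2$, from $y_2<y_1$; for $a=1$ the two are the same polynomial in $y_1$, resp.\ $y_2$.
\end{itemize}
The decisive gain is in the numerators. One has $|x_1-y_1|<1+\frac{2}{a(a+2)}<2$, whereas $R'=(a^2+a-1)x_2y_2+(a-1)x_2+(a+2)y_2+1>a^4+a^3+a^2+a+1$. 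Then $\frac12(a^4+a^3+a^2+a+1)>1+\frac2a$ settles $a\ge 2$. For $a=1$ one needs sharper tail bounds, $x_1>\frac{1+\sqrt3}{2}$, $y_1<1+\sqrt3$ and $x_2,y_2>\frac43$. These give $R'/|R|>\frac{122}{9(1+\sqrt3)}>2>Q_1'Q_2'/(Q_1Q_2)$.

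Your figures (``two extra factors'', ``at least $a^2+2$ per pair'', ``$(1+\frac2a)^2$ from the numerators'') do not correspond to this computation. Taken at face value for $a=1$, one extra pair yields $3^2=9$ against $(1+2)^2=9$, which is not even strict. Until explicit inequalities of this kind are written down and checked, especially for $a=1$, the proof is incomplete.
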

\begin{proof}
In a similar manner to the previous lemma, we set the notation as in \eqref{aa+1} and \eqref{ba+1}.
If $\mathrm{den}(x) = 1$, then $x = 0$, in which case the lemma clearly holds.
Henceforth, we assume that $0<x<y<1$.
We note that $\mathrm{den}(x) > 1$.
By definition, we have
\[
\mathrm{den}(x) < \mathrm{den}(y) \quad \text{and} \quad \lfloor jx \rfloor = \lfloor jy \rfloor \quad \text{for all } 1 \le j \le \mathrm{den}(x).
\]
Let $n = \mathrm{den}(x)$.
Consequently, we have $G(x,j) = G(y,j)$ for all $1 \le j \le n$.
Since $nx = \mathrm{den}(x)x \in \mathbb{Z}$, it follows that $G(x,n) = 1$.
We have
\[
G(x,-n+1) = \lfloor (-n+1)x \rfloor - \lfloor -nx \rfloor
= \lceil nx \rceil - \lceil (n-1)x \rceil.
\]
Since $nx \in \mathbb{Z}$, we obtain $G(x,-n+1) = 0$.
Since $\mathrm{den}(y) > n$, Lemma~\ref{l2}(7) implies that
\[
G(y,-n+1) = G(y,n) = 1.
\]
Similarly, by Lemma~\ref{l2}(7), we have
\[
G(x,j) = G(x,-j+1) \quad \text{and} \quad G(y,j) = G(y,-j+1)
\]
for all $1 < j < n$.

From the above considerations, we obtain the following properties:

\begin{enumerate}
    \item $G(x,0) = 1$, $G(x,1) = 0$, $G(y,0) = 1$, and $G(y,1) = 0$;
    \item $G(x,n) = 1$, $G(y,n) = 1$, $G(x,-n+1) = 0$, and $G(y,-n+1) = 1$;
    \item $G(x,2) \dots G(x,n-1) = G(x,-1) \dots G(x,-n+2)$;
    \item $G(y,2) \dots G(y,n-1) = G(y,-1) \dots G(y,-n+2)$;
    \item $G(x,2) \dots G(x,n-1) = G(y,2) \dots G(y,n-1)$.
\end{enumerate}
Here, for $n = 2$, the products in the last three items are understood to be the empty word.
Thus, we have
\[
\begin{aligned}
a_1 a_2 a_3 \dots a_{2n} &= a a a_3 \dots a_{2n-2} (a+1)(a+1), \\
b_1 b_2 b_3 \dots b_{2n} &= a a a_3 \dots a_{2n-2} (a+1)(a+1),
\end{aligned}
\]
and
\[
\begin{aligned}
a_{-1} a_{-2} \dots a_{-2n+1} &= (a+1) a_3 \dots a_{2n-2} a a, \\
b_{-1} b_{-2} \dots b_{-2n+1} &= (a+1) a_3 \dots a_{2n-2} (a+1)(a+1).
\end{aligned}
\]
Here and in what follows, a word such as $a_3 \dots a_{2n-2}$ is understood to be the empty word when $n = 2$.
Therefore, we obtain
\begin{align}\label{08291}
\begin{aligned}
[0; a_1, a_2, a_3, \dots, a_{2n}, x_1]
&= [0; a, a, a_3, \dots, a_{2n-2}, a+1, a+1, x_1], \\[1ex]
[0; b_1, b_2, b_3, \dots, b_{2n}, y_1]
&= [0; a, a, a_3, \dots, a_{2n-2}, a+1, a+1, y_1], \\[1ex]
[0; a_{-1}, a_{-2}, \dots, a_{-2n+1}, x_2]
&= [0; a+1, a_3, \dots, a_{2n-2}, a, a, x_2], \\[1ex]
[0; b_{-1}, b_{-2}, \dots, b_{-2n+1}, y_2]
&= [0; a+1, a_3, \dots, a_{2n-2}, a+1, a+1, y_2],
\end{aligned}
\end{align}
where
\[
\begin{aligned}
x_1 &= [a_{2n+1}; a_{2n+2}, \dots], & y_1 &= [b_{2n+1}; b_{2n+2}, \dots], \\
x_2 &= [a_{-2n}; a_{-2n-1}, \dots], & y_2 &= [b_{-2n}; b_{-2n-1}, \dots].
\end{aligned}
\]
Let
\begin{align}\label{m_1m_2}
\begin{pmatrix} m_1 & m_2 \\ m_3 & m_4 \end{pmatrix}
= \begin{pmatrix} 0 & 1 \\ 1 & a_3 \end{pmatrix} \dots \begin{pmatrix} 0 & 1 \\ 1 & a_{2n-2} \end{pmatrix},
\end{align}
where this matrix product is understood to be the identity matrix when $n = 2$.

Therefore, by \eqref{08291}, we have
\[
\begin{aligned}
&\begin{pmatrix} 0 & 1 \\ 1 & a_1 \end{pmatrix} \dots \begin{pmatrix} 0 & 1 \\ 1 & a_{2n} \end{pmatrix} \begin{pmatrix} 1 \\ x_1 \end{pmatrix} \\
&\quad = \begin{pmatrix} 0 & 1 \\ 1 & a \end{pmatrix}^2 \begin{pmatrix} m_1 & m_2 \\ m_3 & m_4 \end{pmatrix} \begin{pmatrix} 0 & 1 \\ 1 & a+1 \end{pmatrix}^2 \begin{pmatrix} 1 \\ x_1 \end{pmatrix} \\
&\quad = \begin{pmatrix} P_1 \\ Q_1 \end{pmatrix},
\end{aligned}
\]
where
\begin{equation}\label{a^2m_4}
\begin{aligned}
P_1 &= (a^3 + 2a^2 + 2a) m_4 x_1 + (a^2 + 2a + 2) m_2 x_1 + (a^2 + a) m_3 x_1 + (a + 1) m_1 x_1 \\
    &\quad + (a^2 + a) m_4 + (a + 1) m_2 + a m_3 + m_1, \\[1.5ex]
Q_1 &= (a^4 + 2a^3 + 3a^2 + 2a + 2) m_4 x_1 + (a^3 + 2a^2 + 2a) m_2 x_1 + (a^3 + a^2 + a + 1) m_3 x_1 + (a^2 + a) m_1 x_1 \\
    &\quad + (a^3 + a^2 + a + 1) m_4 + (a^2 + a) m_2 + (a^2 + 1) m_3 + a m_1.
\end{aligned}
\end{equation}
Therefore, we obtain
\[
[0; a_1, a_2, a_3, \dots, a_{2n}, x_1] = \frac{P_1}{Q_1}.
\]

Similarly,
 by \eqref{08291}, we have
\[
\begin{aligned}
&\begin{pmatrix} 0 & 1 \\ 1 & b_1 \end{pmatrix} \dots \begin{pmatrix} 0 & 1 \\ 1 & b_{2n} \end{pmatrix} \begin{pmatrix} 1 \\ y_1 \end{pmatrix} \\
&\quad = \begin{pmatrix} 0 & 1 \\ 1 & a \end{pmatrix}^2 \begin{pmatrix} m_1 & m_2 \\ m_3 & m_4 \end{pmatrix} \begin{pmatrix} 0 & 1 \\ 1 & a+1 \end{pmatrix}^2 \begin{pmatrix} 1 \\ y_1 \end{pmatrix} \\
&\quad = \begin{pmatrix} P_2 \\ Q_2 \end{pmatrix},
\end{aligned}
\]
where
\begin{equation}\label{a^2m_4}
\begin{aligned}
P_2 &= (a^3 + 2a^2 + 2a) m_4 y_1 + (a^2 + 2a + 2) m_2 y_1 + (a^2 + a) m_3 y_1 + (a + 1) m_1 y_1 \\
    &\quad + (a^2 + a) m_4 + (a + 1) m_2 + a m_3 + m_1, \\[1.5ex]
Q_2 &= (a^4 + 2a^3 + 3a^2 + 2a + 2) m_4 y_1 + (a^3 + 2a^2 + 2a) m_2 y_1 + (a^3 + a^2 + a + 1) m_3 y_1 + (a^2 + a) m_1 y_1 \\
    &\quad + (a^3 + a^2 + a + 1) m_4 + (a^2 + a) m_2 + (a^2 + 1) m_3 + a m_1.
\end{aligned}
\end{equation}
Therefore, we obtain
\[
[0; b_1, b_2, b_3, \dots, b_{2n}, y_1] = \frac{P_2}{Q_2}.
\]
By \eqref{08291}, we have
\[
\begin{aligned}
&\begin{pmatrix} 0 & 1 \\ 1 & a_{-1} \end{pmatrix} \dots \begin{pmatrix} 0 & 1 \\ 1 & a_{-2n+1} \end{pmatrix} \begin{pmatrix} 1 \\ x_2 \end{pmatrix} \\
&\quad = \begin{pmatrix} 0 & 1 \\ 1 & a+1 \end{pmatrix} \begin{pmatrix} m_1 & m_2 \\ m_3 & m_4 \end{pmatrix} \begin{pmatrix} 0 & 1 \\ 1 & a \end{pmatrix}^2 \begin{pmatrix} 1 \\ x_2 \end{pmatrix} \\
&\quad = \begin{pmatrix} P_1' \\ Q_1' \end{pmatrix},
\end{aligned}
\]
where
\begin{equation}\label{m_4x_2}
\begin{aligned}
P_1' &= (a^2 + 1) m_4 x_2 + a m_3 x_2 + a m_4 + m_3, \\[1.5ex]
Q_1' &= (a^3 + a^2 + a + 1) m_4 x_2 + (a^2 + 1) m_2 x_2 + (a^2 + a) m_3 x_2 + a m_1 x_2 \\
     &\quad + (a^2 + a) m_4 + a m_2 + (a + 1) m_3 + m_1.
\end{aligned}
\end{equation}
Therefore, we obtain
\[
[0; a_{-1}, a_{-2}, \dots, a_{-2n+1}, x_2] = \frac{P_1'}{Q_1'}.
\]
By \eqref{08291}, we have
\[
\begin{aligned}
&\begin{pmatrix} 0 & 1 \\ 1 & b_{-1} \end{pmatrix} \dots \begin{pmatrix} 0 & 1 \\ 1 & b_{-2n+1} \end{pmatrix} \begin{pmatrix} 1 \\ y_2 \end{pmatrix} \\
&\quad = \begin{pmatrix} 0 & 1 \\ 1 & a+1 \end{pmatrix} \begin{pmatrix} m_1 & m_2 \\ m_3 & m_4 \end{pmatrix} \begin{pmatrix} 0 & 1 \\ 1 & a+1 \end{pmatrix}^2 \begin{pmatrix} 1 \\ y_2 \end{pmatrix} \\
&\quad = \begin{pmatrix} P_2' \\ Q_2' \end{pmatrix},
\end{aligned}
\]
where
\begin{equation}\label{m_4y_2}
\begin{aligned}
P_2' &= (a^2 + 2a + 2) m_4 y_2 + (a + 1) m_3 y_2 + (a + 1) m_4 + m_3, \\[1.5ex]
Q_2' &= (a^3 + 3a^2 + 4a + 2) m_4 y_2 + (a^2 + 2a + 2) m_2 y_2 + (a^2 + 2a + 1) m_3 y_2 + (a + 1) m_1 y_2 \\
     &\quad + (a^2 + 2a + 1) m_4 + (a + 1) m_2 + (a + 1) m_3 + m_1.
\end{aligned}
\end{equation}
Therefore, we obtain
\[
[0; b_{-1}, b_{-2}, \dots, b_{-2n+1}, y_2] = \frac{P_2'}{Q_2'}.
\]

Therefore, considering $m_1 m_4-m_2 m_3=1$, we obtain
\begin{align}\label{P_2Q_2}
\frac{P_2}{Q_2} - \frac{P_1}{Q_1}
= \frac{x_1-y_1}{Q_1 Q_2}.
\end{align}
Let $R$ denote the numerator $x_1-y_1$.

Similarly, we have
\begin{align}\label{P'_2Q'_2}
\frac{P_2'}{Q_2'} - \frac{P_1'}{Q_1'}
= \frac{(a^2 + a - 1) x_2 y_2 + (a-1)x_2 + (a+2)y_2 + 1}{Q_1' Q_2'}.
\end{align}
Let $R'$ denote the numerator $(a^2 + a - 1) x_2 y_2 + (a-1)x_2 + (a+2)y_2 + 1$.

Let us compare $x_1$ and $y_1$.
Since $x < y$, there exists some $m \in \mathbb{Z}_{>0}$ such that
\[
\lfloor mx \rfloor < \lfloor my \rfloor.
\]
We choose $m$ to be the minimal integer satisfying this property.
Furthermore, since $x$ and $y$ are of type (2), it follows that $m > n$.

Note that
\[
x_1 = [a_{2n+1}; a_{2n+2}, \dots] \quad \text{and} \quad y_1 = [b_{2n+1}; b_{2n+2}, \dots].
\]
Since
\[
\begin{aligned}
a_{2n+1} a_{2n+2} \dots &= \phi_{a,a+1}(G(x,n+1) G(x,n+2) \dots), \\
b_{2n+1} b_{2n+2} \dots &= \phi_{a,a+1}(G(y,n+1) G(y,n+2) \dots),
\end{aligned}
\]
we obtain $x_1 < y_1$.
Moreover, an argument similar to the one in Lemma \ref{ml1} yields
\[
x_1 < x_2 \quad \text{and} \quad y_1 > y_2.
\]

As in the previous lemma, it suffices to show that
\begin{equation}\label{fracP_2'2}
\frac{P_2'}{Q_2'} - \frac{P_1'}{Q_1'} > - \left( \frac{P_2}{Q_2} - \frac{P_1}{Q_1} \right),
\end{equation}
that is,
\begin{equation}\label{0921-1}
\frac{R'}{Q_1'Q_2'} > \frac{|R|}{Q_1Q_2}.
\end{equation}
We now compare $R$ and $R'$.
Assume that $a \ge 2$.
Since $x_1 < y_1$, and
\[
x_1 > [a; a+2] = a + \frac{1}{a+2} \quad \text{and} \quad y_1 < [a+1; a] = a + 1 + \frac{1}{a},
\]
it follows that
\[
|R|=|x_1 - y_1| < a + 1 + \frac{1}{a} - \left( a + \frac{1}{a+2} \right) = 1 + \frac{2}{a(a+2)} < 2.
\]
Since
\[
x_2, y_2 > [a; a+2] = a + \frac{1}{a+2} >a,
\]
we have
\[
\begin{aligned}
R' &= (a^2 + a - 1) x_2 y_2 + (a - 1) x_2 + (a + 2) y_2 + 1 \\
    &> (a^2 + a - 1) a^2 + (a - 1) a + (a + 2) a + 1 \\
   &= a^4 + a^3 + a^2 + a + 1.
\end{aligned}
\]
Consequently, we obtain
\begin{align}\label{0830-1}
\frac{R'}{|R|} > \frac{a^4 + a^3 + a^2 + a + 1}{2}.
\end{align}
Next, let us compare $Q_1$ and $Q_1'$.
As in the previous lemma, we have
\[
x_2 < \left( 1 + \frac{2}{a} \right) x_1.
\]
Therefore, by comparing the corresponding coefficients of each term, we easily obtain
\begin{align}\label{0830-2}
\left( 1 + \frac{2}{a} \right) Q_1 > Q_1'.
\end{align}
As for the comparison between $Q_2$ and $Q_2'$, combining the comparison of corresponding coefficients with $y_1 > y_2$ easily yields
\begin{align}\label{0830-3}
Q_2 > Q_2'.
\end{align}
By taking into account \eqref{0830-1}, \eqref{0830-2}, \eqref{0830-3}, and the inequality
\[
\frac{a^4 + a^3 + a^2 + a + 1}{2} > 1 + \frac{2}{a},
\]
we obtain
\[
\frac{|R|}{Q_1 Q_2} < \frac{R'}{Q_1' Q_2'},
\]
which completes the proof of \eqref{fracP_2'2} with $a\geq 2$.
Now consider the case $a = 1$.
In this case, $R$ and $R'$ are given by
\[
\begin{aligned}
R &=x_1 - y_1,  \\
R' &=x_2y_2 + 3y_2 + 1, 
\end{aligned}
\]
and the denominators $Q_1, Q_1', Q_2, Q_2'$ reduce to
\[
\begin{aligned}
Q_1 &= 2m_1x_1 + 5m_2x_1 + 4m_3x_1 + 10m_4x_1 + m_1 + 2m_2 + 2m_3 + 4m_4, \\
Q_1' &=m_1x_2 + 2m_2x_2 + 2m_3x_2 + 4m_4x_2 + m_1 + m_2 + 2m_3 + 2m_4, \\[1ex]
Q_2 &= 2m_1y_1 + 5m_2y_1 + 4m_3y_1 + 10m_4y_1 + m_1 + 2m_2 + 2m_3 + 4m_4, \\
Q_2' &= 2m_1y_2 + 5m_2y_2 + 4m_3y_2 + 10m_4y_2 + m_1 + 2m_2 + 2m_3 + 4m_4.
\end{aligned}
\]

Since
\[
x_1 > [\overline{1; 2}] = \frac{1 + \sqrt{3}}{2} \quad \text{and} \quad y_1 < [\overline{2; 1}] = 1 + \sqrt{3},
\]
we have
\[
R = |x_1 - y_1| < \left| \frac{1 + \sqrt{3}}{2} - (1 + \sqrt{3}) \right| = \frac{1 + \sqrt{3}}{2}.
\]
Since
\[
x_2, y_2 > [1; 3] = \frac{4}{3},
\]
we have
\[
\begin{aligned}
R' &= x_2 y_2 + 3 y_2 + 1 \\
   &> \left( \frac{4}{3} \right)^2 + 3 \left( \frac{4}{3} \right) + 1 = \frac{61}{9}.
\end{aligned}
\]
\begin{align}\label{0830-4}
\frac{R'}{|R|} > \frac{122}{9(1 + \sqrt{3})}.
\end{align}
Regarding the comparison between $Q_1$ and $Q_1'$, since
\[
\frac{x_2}{x_1} < \frac{1 + \sqrt{3}}{\frac{1 + \sqrt{3}}{2}} = 2,
\]
comparing the corresponding coefficients of each term yields
\[
2 Q_1 > Q_1'.
\]

As for the comparison between $Q_2$ and $Q_2'$, an argument similar to the one above gives
\[
Q_2 > Q_2'.
\]
Since
\[
\frac{122}{9(1 + \sqrt{3})} > 2,
\]
we obtain
\[
\frac{|R|}{Q_1 Q_2} < \frac{R'}{Q_1' Q_2'},
\]
which completes the proof of \eqref{fracP_2'2}.

\end{proof}

\begin{lem}\label{ml3}
Let $x, y \in \mathbb{Q}$ satisfy $0 \le x < y \le 1$.
If $x$ and $y$ are of type (3) and $a \in \mathbb{Z}_{>0}$ satisfies $a \ge 2$, then
\[
\mathcal{L}([\phi_{a,a+1}(G(x))]) > \mathcal{L}([\phi_{a,a+1}(G(y))]).
\]
\end{lem}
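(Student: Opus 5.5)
Let $n = \mathrm{den}(y)$. As in Lemmas \ref{ml1} and \ref{ml2}, the plan is to read off the first $2n$ digits on each side of position $0$ for both words, which share a long common block. Then write both Lagrange constants through Theorem~\ref{t3} as $a_0 + [0;a_1,\dots]+[0;a_{-1},\dots]$ and compare the two differences using the matrix $M=\begin{pmatrix} m_1&m_2\\ m_3&m_4\end{pmatrix}$ of the common middle block.

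The degenerate cases are handled first. If $y=1$, then $\mathrm{den}(y)=1$ and $\mathcal L$ for $y$ is $[0;\overline{a+1}]$-based, which is minimal in an appropriate sense. If $x=0$ with $\mathrm{den}(y)<\mathrm{den}(x)$, this is impossible. So I would dispose of $y=1$ directly (or note that $0<x$ is forced) and assume $0<x<y<1$ with $n\ge 2$.

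\textbf{Combinatorics.} By type (3), $\lfloor jx\rfloor=\lfloor jy\rfloor$ for $j<n$ and $\lfloor nx\rfloor<\lfloor ny\rfloor$. This gives $G(x,j)=G(y,j)$ for $1\le j<n$, $G(x,n)=0$ and $G(y,n)=1$. Since $ny\in\mathbb Z$, the proof of Lemma~\ref{l2}(7) gives $G(y,-n+1)=0$. Since $n<\mathrm{den}(x)$, Lemma~\ref{l2}(7) gives $G(x,-n+1)=G(x,n)=0$. For $1<j<n$ both sequences are reflection-symmetric, $G(\cdot,j)=G(\cdot,-j+1)$. Hence
\[
a_1\cdots a_{2n}=aa\,a_3\cdots a_{2n-2}\,aa,\qquad b_1\cdots b_{2n}=aa\,a_3\cdots a_{2n-2}\,(a+1)(a+1),
\]
while the left sides agree: $a_{-1}\cdots a_{-2n+1}=b_{-1}\cdots b_{-2n+1}=(a+1)\,a_3\cdots a_{2n-2}\,aa$. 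So the right halves are exactly as in Lemma~\ref{ml1}, giving
\[
\frac{P_2}{Q_2}-\frac{P_1}{Q_1}=-\frac{R}{Q_1Q_2},\qquad R=(a^2+a-1)x_1y_1+(a-1)x_1+(a+2)y_1+1>0 .
\]
The left halves share the same prefix, and by the $Q_1'$ formula of Lemma~\ref{ml1} (with $x_2,y_2$ as tails) we get
\[
\frac{P_2'}{Q_2'}-\frac{P_1'}{Q_1'}=\pm\frac{c\,(x_2-y_2)}{Q_1'Q_2'}
\]
for an explicit constant $c$ (by determinant considerations, $c=1$ up to sign). Thus the claim $\mathcal L(x)>\mathcal L(y)$ reduces to
\[
\frac{R}{Q_1Q_2}>\frac{|x_2-y_2|}{Q_1'Q_2'} .
\]

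\textbf{Estimates.} Both $x_2,y_2$ lie in $(a+\tfrac1{a+2},\,a+1+\tfrac1a)$, so $|x_2-y_2|<1+\tfrac{2}{a(a+2)}<2$. Also $x_1,y_1>a$, so $R>a^4+a^3+a^2+a+1$ exactly as in Lemma~\ref{ml2}. For the denominators I would bound $Q_1/Q_1'$ and $Q_2/Q_2'$ from above by comparing coefficients termwise, using $x_1<(1+\tfrac2a)x_2$ and $y_1<(1+\tfrac2a)y_2$ (the same estimates as \eqref{x_2<} and \eqref{y_2<}). The comparison here goes in the opposite direction from Lemma~\ref{ml1}, since now $Q_1$ is large relative to $Q_1'$. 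Each $Q_i$ has an extra factor of at most about $a+1$ over $Q_i'$ (the right half has one more digit pair, with matrix entries of size about $a$). So $Q_1Q_2\lesssim C\,a^2(1+2/a)^2\,Q_1'Q_2'$, roughly $O(a^2)$ times $Q_1'Q_2'$. This is beaten by $R/|x_2-y_2|\gtrsim a^4/2$.

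\textbf{Main obstacle.} The bottleneck is the termwise coefficient bound $Q_1\le \lambda Q_1'$ and $Q_2\le\lambda' Q_2'$ with $\lambda\lambda'$ below $(a^4+\dots+1)/2$. These bounds are needed uniformly in $m_1,\dots,m_4\ge 0$ (with $m_1,m_4\ge1$), and they must hold already at $a=2$. For $a\ge3$ the crude ratio of leading coefficients, about $(a^4)/(a^3)\cdot(1+2/a)$, should suffice. For $a=2$ I would, as in Lemma~\ref{ml1}, substitute explicit coefficients, use $x_1/x_2,\,y_1/y_2<14/9$, and check each coefficient ratio by hand; I expect this case to be tight but true.
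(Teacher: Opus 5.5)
Your proposal is correct and is essentially the paper's proof. It uses the same $n=\mathrm{den}(y)$ and digit blocks, the same reduction to $R/(Q_1Q_2)>|x_2-y_2|/(Q_1'Q_2')$, the same bounds $R>a^4+a^3+a^2+a+1$ and $|x_2-y_2|<2$, and coefficient-wise control of $Q_1/Q_1'$ and $Q_2/Q_2'$, with $a=2$ checked separately. The one real difference is at $a=2$: the paper bounds $Q_2/Q_2'$ using $y_1<y_2$ (since $y_2$ begins with $a+1$ and $y_1$ with $a$), giving $Q_2<(a+2+\frac1a)Q_2'$, whereas your weaker $y_1/y_2<\frac{14}{9}$ only gives $Q_1Q_2<\frac{1568}{81}Q_1'Q_2'$, so you must use $|x_2-y_2|<\frac54$ (giving $R/|x_2-y_2|>\frac{124}{5}$) rather than the cruder $<2$.
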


\begin{proof}
In a similar manner to the previous lemma, we set the notation as in \eqref{aa+1} and \eqref{ba+1}.
If $\mathrm{den}(y) = 1$, then $y = 1$, in which case the lemma clearly holds.
Henceforth, we assume that $0<x<y<1$.
We note that $\mathrm{den}(y) > 1$.
By definition,
we have
\[
\begin{gathered}
\mathrm{den}(x) > \mathrm{den}(y), \\
\lfloor jx \rfloor = \lfloor jy \rfloor \quad \text{for all } 1 \le j < \mathrm{den}(y), \quad \text{and} \\
\lfloor jx \rfloor < \lfloor jy \rfloor \quad \text{for } j = \mathrm{den}(y).
\end{gathered}
\]
Let $n=\mathrm{den}(y)$.
We have $G(x,j) = G(y,j)$ for all $1 \le j < n$.
Since $ny = \mathrm{den}(y)y \in \mathbb{Z}$, it follows that $G(y,n) = 1$.
On the other hand, this fact ($ny = \mathrm{den}(y)y \in \mathbb{Z}$) together with
\[
\lfloor (n-1)x \rfloor = \lfloor (n-1)y \rfloor \quad \text{and} \quad \lfloor nx \rfloor < \lfloor ny \rfloor
\]
implies that $G(x,n) = 0$.

Since $n = \mathrm{den}(y) < \mathrm{den}(x)$,  by Lemma \ref{l2}(7) we have
\[
G(x,j) = G(x,-j+1) \quad \text{and} \quad G(y,j) = G(y,-j+1)
\]
for all $1 < j < n$.
Furthermore, since $ny \in \mathbb{Z}$, we get $G(y,-n+1) = 0$.
On the other hand,
\[
G(x,n) = G(x,-n+1) = 0.
\]
From the above considerations, we obtain the following properties:

\begin{enumerate}
    \item $G(x,0) = 1$, $G(x,1) = 1$, $G(y,0) = 1$, and $G(y,1) = 0$;
    \item $G(x,n) = 0$, $G(y,n) = 1$, $G(x,-n+1) = 0$, and $G(y,-n+1) = 0$;
    \item $G(x,2) \dots G(x,n-1) = G(x,-1) \dots G(x,-n+2)$;
    \item $G(y,2) \dots G(y,n-1) = G(y,-1) \dots G(y,-n+2)$;
    \item $G(x,2) \dots G(x,n-1) = G(y,2) \dots G(y,n-1)$.
\end{enumerate}
Here, for $n = 2$, the products in the last three items are understood to be the empty word.
Thus, we have
\[
\begin{aligned}
a_1 a_2 a_3 \dots a_{2n} &= a a a_3 \dots a_{2n-2} a a, \\
b_1 b_2 b_3 \dots b_{2n} &= a a a_3 \dots a_{2n-2} (a+1)(a+1),
\end{aligned}
\]
and
\[
\begin{aligned}
a_{-1} a_{-2} \dots a_{-2n+1} &= (a+1) a_3 \dots a_{2n-2} a a, \\
b_{-1} b_{-2} \dots b_{-2n+1} &= (a+1) a_3 \dots a_{2n-2} a a.
\end{aligned}
\]
Here and in what follows, a word such as $a_3 \dots a_{2n-2}$ is understood to be the empty word when $n = 2$.
Therefore, we obtain
\begin{align}\label{09091}
\begin{aligned}
[0; a_1, a_2, a_3, \dots, a_{2n}, x_1]
&= [0; a, a, a_3, \dots, a_{2n-2}, a, a, x_1], \\[1ex]
[0; b_1, b_2, b_3, \dots, b_{2n}, y_1]
&= [0; a, a, a_3, \dots, a_{2n-2}, a+1, a+1, y_1], \\[1ex]
[0; a_{-1}, a_{-2}, \dots, a_{-2n+1}, x_2]
&= [0; a+1, a_3, \dots, a_{2n-2}, a, a, x_2], \\[1ex]
[0; b_{-1}, b_{-2}, \dots, b_{-2n+1}, y_2]
&= [0; a+1, a_3, \dots, a_{2n-2}, a, a, y_2],
\end{aligned}
\end{align}
where
\[
\begin{aligned}
x_1 &= [a_{2n+1}; a_{2n+2}, \dots], & y_1 &= [b_{2n+1}; b_{2n+2}, \dots], \\
x_2 &= [a_{-2n}; a_{-2n-1}, \dots], & y_2 &= [b_{-2n}; b_{-2n-1}, \dots].
\end{aligned}
\]
Let
\begin{align}\label{m_1m_2}
\begin{pmatrix} m_1 & m_2 \\ m_3 & m_4 \end{pmatrix}
= \begin{pmatrix} 0 & 1 \\ 1 & a_3 \end{pmatrix} \dots \begin{pmatrix} 0 & 1 \\ 1 & a_{2n-2} \end{pmatrix},
\end{align}
where this matrix product is understood to be the identity matrix when $n = 2$.
Therefore, by \eqref{09091}, we have
\[
\begin{aligned}
&\begin{pmatrix} 0 & 1 \\ 1 & a_1 \end{pmatrix} \dots \begin{pmatrix} 0 & 1 \\ 1 & a_{2n} \end{pmatrix} \begin{pmatrix} 1 \\ x_1 \end{pmatrix} \\
&\quad = \begin{pmatrix} 0 & 1 \\ 1 & a \end{pmatrix}^2 \begin{pmatrix} m_1 & m_2 \\ m_3 & m_4 \end{pmatrix} \begin{pmatrix} 0 & 1 \\ 1 & a \end{pmatrix}^2 \begin{pmatrix} 1 \\ x_1 \end{pmatrix} \\
&\quad = \begin{pmatrix} P_1 \\ Q_1 \end{pmatrix},
\end{aligned}
\]
where
\begin{equation}\label{09092}
\begin{aligned}
P_1 &= a^3 m_4x_1 + a^2 (m_2 + m_3)x_1 + a m_1x_1 + a m_4x_1 + m_2x_1 \\
    &\quad + a^2m_4 + a(m_2 + m_3) + m_1, \\[1.5ex]
Q_1 &= (a^4 + 2a^2 + 1)m_4 x_1 + (a^3 + a)(m_2 + m_3)x_1 + a^2 m_1x_1 \\
    &\quad + (a^3+a)m_4 + a^2m_2 + (a^2+1)m_3 + am_1.
\end{aligned}
\end{equation}
Therefore, we obtain
\[
[0; a_1, a_2, a_3, \dots, a_{2n}, x_1] = \frac{P_1}{Q_1}.
\]
Similarly,
 by \eqref{09091}, we have
\[
\begin{aligned}
&\begin{pmatrix} 0 & 1 \\ 1 & b_1 \end{pmatrix} \dots \begin{pmatrix} 0 & 1 \\ 1 & b_{2n} \end{pmatrix} \begin{pmatrix} 1 \\ y_1 \end{pmatrix} \\
&\quad = \begin{pmatrix} 0 & 1 \\ 1 & a \end{pmatrix}^2 \begin{pmatrix} m_1 & m_2 \\ m_3 & m_4 \end{pmatrix} \begin{pmatrix} 0 & 1 \\ 1 & a+1 \end{pmatrix}^2 \begin{pmatrix} 1 \\ y_1 \end{pmatrix} \\
&\quad = \begin{pmatrix} P_2 \\ Q_2 \end{pmatrix},
\end{aligned}
\]
where
\begin{equation}\label{a^2m_4-2}
\begin{aligned}
P_2 &= (a^3 + 2a^2 + 2a) m_4 y_1 + (a^2 + 2a + 2) m_2 y_1 + (a^2 + a) m_3 y_1 + (a + 1) m_1 y_1 \\
    &\quad + (a^2 + a) m_4 + (a + 1) m_2 + a m_3 + m_1, \\[1.5ex]
Q_2 &= (a^4 + 2a^3 + 3a^2 + 2a + 2) m_4 y_1 + (a^3 + 2a^2 + 2a) m_2 y_1 + (a^3 + a^2 + a + 1) m_3 y_1 + (a^2 + a) m_1 y_1 \\
    &\quad + (a^3 + a^2 + a + 1) m_4 + (a^2 + a) m_2 + (a^2 + 1) m_3 + a m_1.
\end{aligned}
\end{equation}
Therefore, we obtain
\[
[0; b_1, b_2, b_3, \dots, b_{2n}, y_1] = \frac{P_2}{Q_2}.
\]
By \eqref{09091}, we have
\[
\begin{aligned}
&\begin{pmatrix} 0 & 1 \\ 1 & a_{-1} \end{pmatrix} \dots \begin{pmatrix} 0 & 1 \\ 1 & a_{-2n+1} \end{pmatrix} \begin{pmatrix} 1 \\ x_2 \end{pmatrix} \\
&\quad = \begin{pmatrix} 0 & 1 \\ 1 & a+1 \end{pmatrix} \begin{pmatrix} m_1 & m_2 \\ m_3 & m_4 \end{pmatrix} \begin{pmatrix} 0 & 1 \\ 1 & a \end{pmatrix}^2 \begin{pmatrix} 1 \\ x_2 \end{pmatrix} \\
&\quad = \begin{pmatrix} P_1' \\ Q_1' \end{pmatrix},
\end{aligned}
\]
where
\begin{equation}\label{m_4x_2-2}
\begin{aligned}
P_1' &= (a^2 + 1) m_4 x_2 + a m_3 x_2 + a m_4 + m_3, \\[1.5ex]
Q_1' &= (a^3 + a^2 + a + 1) m_4 x_2 + (a^2 + 1) m_2 x_2 + (a^2 + a) m_3 x_2 + a m_1 x_2 \\
     &\quad + (a^2 + a) m_4 + a m_2 + (a + 1) m_3 + m_1.
\end{aligned}
\end{equation}
Therefore, we obtain
\[
[0; a_{-1}, a_{-2}, \dots, a_{-2n+1}, x_2] = \frac{P_1'}{Q_1'}.
\]

By \eqref{09091}, we have
\[
\begin{aligned}
&\begin{pmatrix} 0 & 1 \\ 1 & b_{-1} \end{pmatrix} \dots \begin{pmatrix} 0 & 1 \\ 1 & b_{-2n+1} \end{pmatrix} \begin{pmatrix} 1 \\ y_2 \end{pmatrix} \\
&\quad = \begin{pmatrix} 0 & 1 \\ 1 & a+1 \end{pmatrix} \begin{pmatrix} m_1 & m_2 \\ m_3 & m_4 \end{pmatrix} \begin{pmatrix} 0 & 1 \\ 1 & a \end{pmatrix}^2 \begin{pmatrix} 1 \\ y_2 \end{pmatrix} \\
&\quad = \begin{pmatrix} P_2' \\ Q_2' \end{pmatrix},
\end{aligned}
\]
where
\begin{equation}\label{m_4y_2-2}
\begin{aligned}
P_2' &= (a^2 + 1) m_4 y_2 + a m_3 y_2 + a m_4 + m_3, \\[1.5ex]
Q_2' &= (a^3 + a^2 + a + 1) m_4 y_2 + (a^2 + 1) m_2 y_2 + (a^2 + a) m_3 y_2 + a m_1 y_2 \\
     &\quad + (a^2 + a) m_4 + a m_2 + (a + 1) m_3 + m_1.
\end{aligned}
\end{equation}
Therefore, we obtain
\[
[0; b_{-1}, b_{-2}, \dots, b_{-2n+1}, y_2] = \frac{P_2'}{Q_2'}.
\]

By taking $m_1 m_4 - m_2 m_3 = 1$ into account, we obtain
\begin{equation}\label{P_2Q_2-3}
\frac{P_2}{Q_2} - \frac{P_1}{Q_1}
= -\frac{(a^2 + a - 1) x_1 y_1 + (a-1)x_1 + (a+2)y_1 + 1}{Q_1 Q_2}
\end{equation}
and
\[
\frac{P_2'}{Q_2'} - \frac{P_1'}{Q_1'}
= -\frac{x_2 - y_2}{Q_1' Q_2'}.
\]
Let $R$ denote the numerator $(a^2 + a - 1) x_1 y_1 + (a-1)x_1 + (a+2)y_1 + 1$, 
and let $R'$ denote the numerator $x_2 - y_2$.
Similarly to Lemma \ref{ml1}, we have
\[
x_1 > x_2 \quad \text{and} \quad y_1 < y_2.
\]
The comparison between $x_2$ and $y_2$ is not straightforward.

To prove the lemma, it suffices to show that
\begin{equation}\label{fracP_2'-3}
\frac{P_2'}{Q_2'} - \frac{P_1'}{Q_1'} < - \left( \frac{P_2}{Q_2} - \frac{P_1}{Q_1} \right),
\end{equation}
that is,
\begin{equation}\label{0921-3}
\frac{|R'|}{Q_1'Q_2'} < \frac{R}{Q_1Q_2}.
\end{equation}
We now compare $R$ and $|R'|$.
Assume that $a \ge 3$.
As shown in the previous lemma, we have
\begin{align}\label{0911-1}
a + \frac{1}{a+2} < x_i, y_i < a + 1 + \frac{1}{a}
\end{align}
for $i = 1, 2$.
Therefore,
\[
\begin{aligned}
R &= (a^2 + a - 1) x_1 y_1 + (a - 1) x_1 + (a + 2) y_1 + 1 \\
  &> (a^2 + a - 1) a^2 + (a - 1) a + (a + 2) a + 1 \\
  &= a^4 + a^3 + a^2 + a + 1
\end{aligned}
\]
and
\[
|R'| = |x_2 - y_2| < a + 1 + \frac{1}{a} - \left( a + \frac{1}{a+2} \right) = 1 + \frac{2}{a(a+2)} < 2.
\]
Therefore, we obtain
\[
\frac{R}{|R'|} > \frac{a^4 + a^3 + a^2 + a + 1}{2}.
\]

We now compare $Q_1$ and $Q_1'$.
As in the previous lemma, we have
\[
\left( 1 + \frac{2}{a} \right) x_2 > x_1.
\]
Thus, by comparing the corresponding coefficients, we obtain
\[
a \left( 1 + \frac{2}{a} \right) Q_1' > Q_1.
\]
Next, we compare $Q_2$ and $Q_2'$.
Taking $y_2 > y_1$ into account, a comparison of the corresponding coefficients yields
\[
\left( a + 2 + \frac{1}{a} \right) Q_2' > Q_2.
\]
Consequently, we have
\[
\frac{Q_1 Q_2}{Q_1' Q_2'} < \left( a + 2 + \frac{1}{a} \right) a \left( 1 + \frac{2}{a} \right).
\]
Since it is straightforward to verify that
\[
\left( a + 2 + \frac{1}{a} \right) a \left( 1 + \frac{2}{a} \right) < \frac{a^4 + a^3 + a^2 + a + 1}{2},
\]
we arrive at \eqref{fracP_2'-3}.
Now consider the case $a = 2$.
In this case, $R$ and $R'$ are given by
\[
\begin{aligned}
R &= 5 x_1 y_1 + x_1 + 4 y_1 + 1, \\
R' &= x_2 - y_2,
\end{aligned}
\]
and the denominators $Q_1, Q_1', Q_2, Q_2'$ reduce to
\[
\begin{aligned}
Q_1 &= 4 m_1 x_1 + 10 m_2 x_1 + 10 m_3 x_1 + 25 m_4 x_1 + 2 m_1 + 4 m_2 + 5 m_3 + 10 m_4, \\
Q_1' &= 2 m_1 x_2 + 5 m_2 x_2 + 6 m_3 x_2 + 15 m_4 x_2 + m_1 + 2 m_2 + 3 m_3 + 6 m_4, \\[1ex]
Q_2 &= 6 m_1 y_1 + 20 m_2 y_1 + 15 m_3 y_1 + 50 m_4 y_1 + 2 m_1 + 6 m_2 + 5 m_3 + 15 m_4, \\
Q_2' &= 2 m_1 y_2 + 5 m_2 y_2 + 6 m_3 y_2 + 15 m_4 y_2 + m_1 + 2 m_2 + 3 m_3 + 6 m_4.
\end{aligned}
\]
Similarly, we have
\[
\frac{9}{4} < x_i, y_i < \frac{7}{2}
\]
for $i = 1, 2$.
Therefore, we have
\[
R > 5 \left( \frac{9}{4} \right)^2 + \frac{9}{4} + 4 \cdot \frac{9}{4} + 1 = \frac{601}{16}
\]
and
\[
|R'| = |x_2 - y_2| < \frac{7}{2} - \frac{9}{4} = \frac{5}{4}.
\]
Combining these bounds, we obtain
\[
\frac{R}{|R'|} > \frac{601/16}{7/4} = \frac{601}{28}.
\]
Similarly, we have
\[
\frac{x_1}{x_2} < \frac{14}{9}.
\]
Consequently, we obtain
\[
\frac{28}{9} Q_1' > Q_1 \quad \text{and} \quad  4Q_2' > Q_2,
\]
which leads to
\[
\frac{Q_1 Q_2}{Q_1' Q_2'} < \frac{112}{9}.
\]
Since $\frac{601}{28} > \frac{112}{9}$, combining these bounds yields
\[
\frac{R}{Q_1 Q_2} > \frac{|R'|}{Q_1' Q_2'},
\]
which completes the proof of \eqref{fracP_2'-3}.
\end{proof}

\section{Conjecture}\label{sec:conjecture}
It is a natural step to consider the Lagrange constants of $\phi_{a,b}(G(x))$ for general $b > a$. Based on extensive numerical computations, we formulate the following conjecture. To date, no counterexamples have been found.
Interestingly, the numerical data suggest a sharp transition at $b=a^2+2$.

\begin{con}
Let $a, b \in \mathbb{Z}_{>0}$ satisfy $1 \le a < b$, and let $x, y \in \mathbb{Q} \cap [0, 1]$ with $x < y$.

\begin{enumerate}
\item[\rm (I)] Suppose that $b < a^2 + 2$ $($where $a \ge 2$$)$. Then the following hold:
\begin{enumerate}
\item[\rm (1)] If $(x, y)$ is of type (1), then
\[
\mathcal{L}([\phi_{a,b}(G(x))]) < \mathcal{L}([\phi_{a,b}(G(y))]).
\]
\item[\rm (2)] If $(x, y)$ is of type (2), then
\[
\mathcal{L}([\phi_{a,b}(G(x))]) < \mathcal{L}([\phi_{a,b}(G(y))]).
\]
\item[\rm (3)] If $(x, y)$ is of type (3), then
\[
\mathcal{L}([\phi_{a,b}(G(x))]) > \mathcal{L}([\phi_{a,b}(G(y))]).
\]
\end{enumerate}

\item[\rm (II)] Suppose that $b \ge a^2 + 2$. Then the following hold:
\begin{enumerate}
\item[\rm (1)] If $(x, y)$ is of type (1) and $a \ge 2$, then
\[
\mathcal{L}([\phi_{a,b}(G(x))]) > \mathcal{L}([\phi_{a,b}(G(y))]).
\]
\item[\rm (2)] If $(x, y)$ is of type (2), then
\[
\mathcal{L}([\phi_{a,b}(G(x))]) < \mathcal{L}([\phi_{a,b}(G(y))]).
\]
\item[\rm (3)] If $(x, y)$ is of type (3) and $a \ge 2$, then
\[
\mathcal{L}([\phi_{a,b}(G(x))]) > \mathcal{L}([\phi_{a,b}(G(y))]).
\]
\end{enumerate}
\end{enumerate}
\end{con}

\section*{Acknowledgements}
The author would like to express his sincere gratitude to Professor Ikuro Sato for inspiring the study on generalizations of the Lagrange spectrum.The author is also deeply grateful to Professor Junya Satoh for providing excellent research facilities and support.

\vspace{2cm}

\noindent
Shin-ichi Yasutomi: Graduate School of Informatics, Nagoya  University, JAPAN\\
{\it E-mail address: yasutomi.shinichi.f1@f.mail.nagoya-u.ac.jp}

\begin{thebibliography}{9}

\bibitem{Aigner2013}
M.~Aigner,
\emph{Markov's Theorem and 100 Years of the Uniqueness Conjecture},
Springer Undergraduate Mathematics Series, Springer, Cham, 2013.

\bibitem{Banaian2025}
E.~Banaian,
\emph{Orderings on $k$-Markov numbers},
preprint, arXiv:2512.04026, 2025.

\bibitem{Gyoda2025}
Y.~Gyoda,
\emph{Generalized discrete Markov spectra},
preprint, arXiv:2512.04547, 2025.

\bibitem{Lee2023}
K.~Lee, L.~Li, M.~Rabideau, and R.~Schiffler,
\emph{On the ordering of the Markov numbers},
Adv. Appl. Math. \textbf{143} (2023), 102453.

\bibitem{Lothaire2002}
M.~Lothaire,
\emph{Algebraic Combinatorics on Words},
Cambridge University Press, Cambridge, 2002.

\bibitem{Markoff1879}
A.~Markoff,
\emph{Sur les formes quadratiques binaires ind{\'e}finies},
Math. Ann. \textbf{15} (1879), 381--406.

\bibitem{Markoff1880}
A.~Markoff,
\emph{Sur les formes quadratiques binaires ind{\'e}finies (second m{\'e}moire)},
Math. Ann. \textbf{17} (1880), 379--399.

\bibitem{Rabideau2020}
M.~Rabideau and R.~Schiffler,
\emph{Continued fractions and orderings on the Markov numbers},
Adv. Math. \textbf{370} (2020), 107231.

\bibitem{Reutenauer2019}
C.~Reutenauer,
\emph{From Christoffel Words to Markoff Numbers},
Oxford University Press, Oxford, 2019.

\end{thebibliography}
\end{document}